\numberwithin{equation}{section}
\newcommand{\R}{{\mathbb R}}
\newcommand{\Z}{{\mathbb Z}}
\newcommand{\be}{\begin{eqnarray}}
\newcommand{\ben}{\begin{eqnarray*}}
\newcommand{\en}{\end{eqnarray}}
\newcommand{\enn}{\end{eqnarray*}}
\newcommand{\ov}{\overline}
\newcommand{\ga}{\gamma}
\newcommand{\G}{\Gamma}
\newcommand{\Om}{\Omega}
\newcommand{\om}{\omega}
\newtheorem{theorem}{Theorem}[section]
\newtheorem{lemma}[theorem]{Lemma}
\newtheorem{corollary}[theorem]{Corollary}
\newtheorem{definition}[theorem]{Definition}
\newtheorem{remark}[theorem]{Remark}
\begin{document}
\renewcommand{\theequation}{\arabic{section}.\arabic{equation}}
\begin{titlepage}
\title{\bf Analysis of the Fourier series  Dirichlet-to-Neumann boundary condition of the Helmholtz equation and its application to finite element methods}
\date{}

\author{Liwei Xu\thanks{School of Mathematical Sciences, University of Electronic Science and Technology of China, Sichuan, 611731, China. Email:{\tt xul@uestc.edu.cn}}\;, Tao Yin\thanks{Department of Computing \& Mathematical Sciences, California Institute of Technology, 1200 East California Blvd., CA 91125, United States. Email:{\tt taoyin89@caltech.edu}}}

\end{titlepage}
\maketitle
\vspace{.2in}
\begin{abstract}
 It is well known that the Fourier series Dirichlet-to-Neumann (DtN) boundary condition can be used to solve the Helmholtz equation in unbounded domains. In this work, applying such DtN boundary condition and using the finite element method (FEM), we solve and analyze a two dimensional transmission problem describing  elastic waves inside a bounded and closed elastic obstacle and acoustic waves outside it.  We are mainly interested in analyzing the DtN boundary condition of the Helmholtz equation in order to establish  the  well-posedness results of the approximated variational equation, and further derive a priori error estimates involving  effects of both the finite element discretization and the DtN boundary condition truncation. Finally,  some numerical results are presented to illustrate the accuracy of the numerical scheme.
\vspace{.2in}

{\bf Keywords:} Acoustic wave, elastic wave, Dirichlet-to-Neumann boundary condition, finite element method, fluid-solid interaction problem.
\end{abstract}

\section{Introduction}
\label{sec:1}
As considering numerical solutions of an exterior problem of time-harmonic waves, one usually employs a non-reflecting  boundary condition, also known as an absorbing boundary condition,  on an artificial boundary which decomposes the exterior  region. The Fourier series Dirichlet-to-Neumann (DtN) boundary condition(\cite{F83,F84,KG89}), also called the DtN map,  is regarded as one of absorbing boundary conditions. The DtN map  actually establishes a relationship between  the Dirichlet data and the Neumann data on the artificial boundary, and it allows us to reduce the original exterior boundary value problem to a  nonlocal boundary value problem inside the artificial boundary  for which domain methods are able to be applied for numerical solutions.  In particular, if the finite element method (FEM) is employed, it leads us to the  DtN-FEM (\cite{KG89,G99}).

There exist some issues as the DtN map is employed  for the solution of an exterior boundary value problem of time-harmonic scattering waves. The first issue is the restriction  of the shape of the artificial boundary, and this problem  has been investigated  in earlier works (\cite{NN04,NN06}) in which the authors apply a  perturbation method  to the DtN map so that it can be defined on a perturbed curve of the artificial boundary with regular shapes such as a circle or an ellipse in two dimensions. The new DtN map leads to an improvement on efficiency by a reduction of  computational area between the boundary of scatters and the artificial boundary. The second one is a potential loss of the uniqueness (\cite{GK95,NN06}) of the  solution due to a truncation of the infinite series of the DtN map. In \cite{GK95}, the authors defined a modified form of the DtN map to circumvent this difficulty. The modified form of the DtN  map is equivalent to the original DtN map, and however is not equivalent to the DtN map after the truncation (\cite{HW13}). From the numerical point of view, the authors of \cite{HH92} suggested to make a choice of $N\ge kR$ for the sake of stability and accuracy, where $N$ is the truncation order of the infinite series, $k$ is the wave number and $R$ is the radius of the circular artificial boundary.  The third issue is related to the error estimates due to the truncation of the DtN map.   For problems of Laplace and linear elasto-statics, authors of  \cite{HW85,HW92} have derived an priori error estimates including  such effects. However, the techniques  in \cite{HW85,HW92} can not been applied directly to the acoustic and elastic wave problems since their corresponding bilinear forms in the weak formulations are usually indefinite. One certainly  could apply the Fredholm alternative theory and  the Schatz argument(\cite{Schatz74}), as to be presented in this work,  to perform analysis provided that the corresponding uniqueness results have to be established. In this sense, the consideration of the second issue would be of importance to deal with the third issue.  In addition, the existing results (\cite{HW85,HW92}) only theoretically indicate the algebraic decays in the form of $N^{-t}$ with $N$ being the truncation order of the infinite series, and $t$ being a positive constant. However, Numerical errors due to the truncation of the DtN map are actually understood (\cite{NN06}) to have an order of exponential decays in the form of $q^{-N}$ with  $q$ being a constant such that $0<q<1$. Similar analysis results for diffraction grating problems can be found in \cite{Bao95,Bao97,HRY}.

This work is mainly devoted to studying the last two issues of the DtN map mentioned above. To our knowledge, there are no exsiting literatures showing in a direct way the uniqueness of weak solutions of the nonlocal boundary value problem after the truncation of the DtN map, and presenting an error estimate indicating the feature of exponential decays due to the truncation of the DtN map.   We will give a novel procedure to prove the unique solvability (Theorem \ref{wellposedness2}) of the corresponding truncated variational equation instead of a proof by contradiction (\cite{HNPX11,GYX}). In order to study the errors due to the truncation of the infinite series DtN map, we derive a new and more apparent truncation error estimates declaring exponential attenuations between the exact DtN map and its truncated form (Theorem \ref{EstDiffSSN1}). Utilizing  these two results and classical finite element analysis, we are allowed to derive a priori error estimates (Theorem \ref{H1estimate} and \ref{H0estimate}) involving  effects of both finite element discretizations and infinite series truncations,  explicitly indicating the order of exponential decays of errors due to  truncations of the DtN map.

To the purpose of presentation, we consider a model problem of the time-harmonic fluid-solid interaction (FSI) problem (\cite{HKS88, H94,HKR00}). The model describes  the scattering of   time-harmonic acoustic incident waves by an elastic body immersed in fluids, and is of great importance in many fields such as exploration seismology, oceanography, and non-destructive testing, and to name a few.  Numerical solutions of the time-harmonic  FSI problem have been studied  widely using the FEM (\cite{BM91,NN09,DSM121,DSM122,GHM09,GMM07, GMM12,HKM08,MMS04,YRX}), or the boundary integral equation (BIE) methods (\cite{LM95,YHX}). Here, we declare  that our main concerns consist of the analysis of the DtN boundary condition as it is coupled with FEM to solve problems in unbounded domains, without intentions of performing sophisticated finite element analysis for the model problem (\cite{NN09,DSM121,DSM122,GHM09,GMM07,GMM12}), or for time-harmonic waves (\cite{MS10}).

The remainder of the paper is organized as follows. We first describe the classical FSI problem (Section \ref{sec:2.1}) and then reduce the transmission problem to an equivalent nonlocal boundary value problem (Section \ref{sec:2.2}). Essential mathematical analysis for the corresponding variational equation of the nonlocal boundary value problem and its modification due to truncation of the DtN map are discussed in Section \ref{sec:2.2} and \ref{sec:3}, respectively. In Section \ref{sec:4}, based on a point estimate of the DtN map, we establish a priori error estimates including effects of both the finite element discretization and the DtN boundary condition truncation for the finite element solution of the modified variational equation. Several numerical experiments are presented to confirm our theoretical results in Section \ref{sec:5}.

\section{Statement of the problem}
\label{sec:2}
\begin{figure}
\centering
{\includegraphics[scale=1.5]{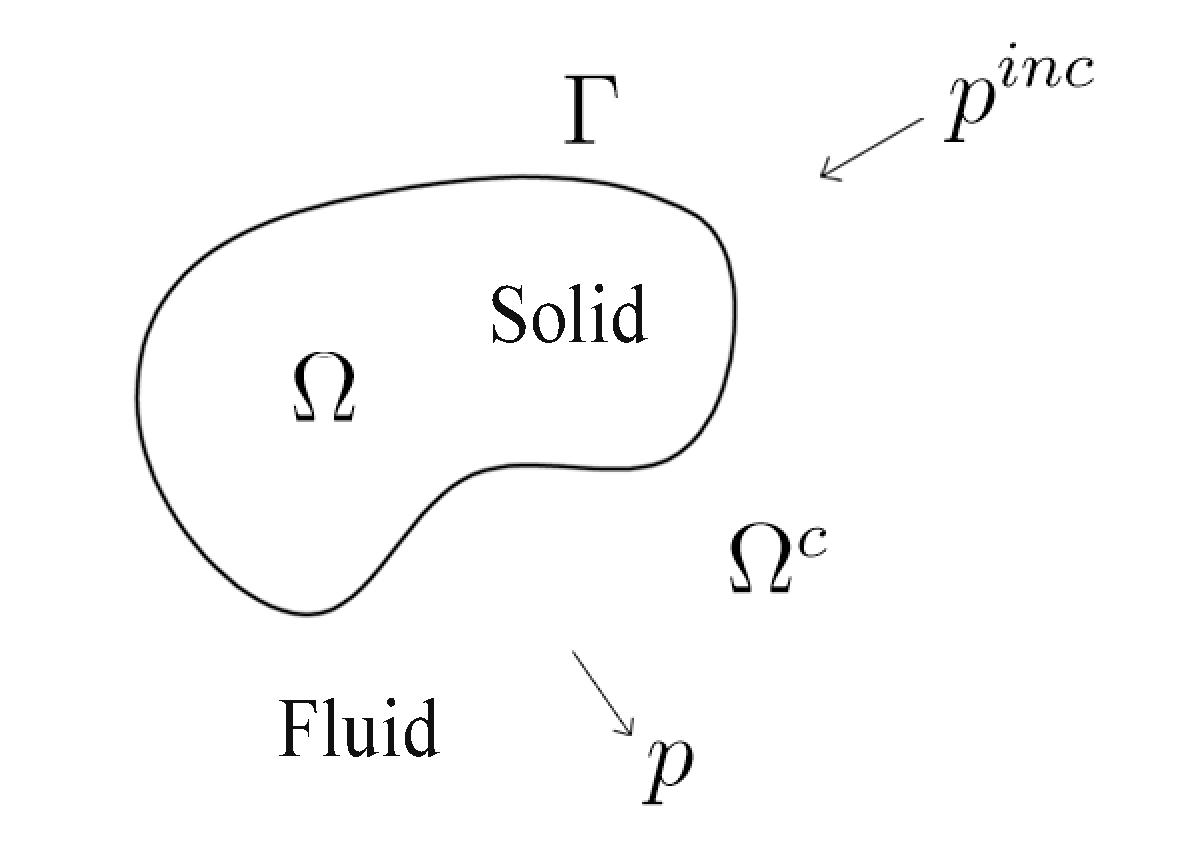}}
\caption{The geometry settings for the fluid-solid interaction problem (\ref{Navier1})-(\ref{RadiationCond}).}\label{fig1}
\end{figure}

\subsection{Original boundary value problem}
\label{sec:2.1}
Let $\Om\subset \R^2$ denote a bounded and simply connected domain with smooth interface  $\G= \partial\Om$,  and $\Om^c= \R^2\setminus\overline{\Om}\subset \R^2$ be the unbounded exterior region. The domain $\Om$ is occupied by a linear and isotropic elastic solid determined through the Lam\'e constants $\lambda$ and $\mu$ ($\mu$>0, $\lambda+\mu>0$) and its mass density $\rho>0$, and $\Om^c$ is filled with compressible, inviscid fluids. We denote by $c_0>0$ the speed of sound in the fluid, $\rho_f>0$ the density of the fluid, $\omega$ the frequency and $k=\omega/c_0$ the wave number. The fluid-solid interaction problem to be considered in this work is to determine the elastic displacement $u$ in the solid and the acoustic scattered field $p$ in the fluid provided an incident field $p^{inc}$. It can be described mathematically as follows. Given $p^{inc}$, find $u=(u_x,u_y)^\top\in \left(C^2(\Om)\cap C^1(\overline\Om)\right)^2$ and $p\in C^2(\Om^c)\cap C^1(\overline{\Om^c})$ satisfying
\be
\label{Navier1}
\Delta^{*}u +   \rho \om^2u = 0&&\mbox{in}\quad \Om,\\
\label{Helmholtz1}
\Delta p + k^2p = 0&&\mbox{in}\quad \Om^c,\\
\label{TransCond1.1}
\om^2\rho_fu\cdot \nu -\partial_\nu p-\partial_\nu p^{inc}=0 &&\mbox{on}\quad \G,\\
\label{TransCond2.1}
Tu +\nu p + \nu p^{inc}=0&&\mbox{on}\quad \G,
\en
and the Sommerfeld radiation condition
\be
\label{RadiationCond}
\lim_{r \to \infty} r^{\frac{1}{2}}\left(\frac{\partial p}{\partial r}-ikp\right) =  0,\quad r=|x|,
\en
uniformly with respect to all $\hat{x}=x/|x|\in\mathbb{S}:=\{x\in\R^2:|x|=1\}$. Here, $\partial_\nu$ is the normal derivative on $\G$ (here and in the sequel, $\nu$ is always the outward unit normal to the boundary), $i=\sqrt{-1}$ is the imaginary unit. $\Delta^{*}$ is the operator defined by
\ben
\Delta^* = \mu\Delta + (\lambda + \mu)\nabla\nabla\cdot,
\enn
and the traction operator $T$ is defined by
\ben
Tu = 2\mu\partial_\nu u + \lambda\nu\nabla\cdot u + \mu\nu\times\mbox{curl}\,u.
\enn

It is known (\cite{JDS83}) that the problem (\ref{Navier1})-(\ref{RadiationCond}) is not always uniquely solvable due to the occurrence of traction free oscillations for certain geometries and some frequencies $\omega$. These $\omega$ are also known as the Jones frequencies which are inherent to the original model. We conclude from \cite{LM95} the following uniqueness result.

\begin{lemma}
\label{tractionfree}
If the surface $\Gamma$ and the material parameters $(\mu,\lambda,\rho)$ are such that there are no traction free solutions, the boundary value problem (\ref{Navier1})-(\ref{RadiationCond}) has at most one solution. Here, we call a nontrivial $u_0$ a traction free solution if it solves
\ben
\label{Eq:TracEq1}
\Delta^{*}u_0 + \rho\omega^2u_0 =0 &&\mbox{in}\quad \Omega,\\
\label{Eq:TracEq2}
Tu_0 =0 &&\mbox{on}\quad\Gamma,\\
\label{Eq:TracEq3}
u_0\cdot\nu =0 &&\mbox{on}\quad\Gamma.
\enn
\end{lemma}

To simplify the presentation throughout the dissertation, we shall denote by $c>0$, $\alpha>0$ generic constants whose precise values are not required and may change line by line.

\subsection{Nonlocal boundary value problem}
\label{sec:2.2}
We introduce an artificial circular boundary $\G_R:=\{x\in\R^2:|x|=R\}$ such that $\ov{\Om}\subset B_R:=\{x\in\R^2:|x|<R\}$. The artificial boundary decomposes the exterior domain $\Om^c$ into two subdomains. One is the annular region $\Om_R=B_R\backslash\ov{\Om}$ between $\G$ and $\G_R$, and the other is the unbounded exterior region $\Om_R^c=\mathbb{R}^{2}\backslash \overline{B_R}$. On $\G_R$, we impose the exact transparent boundary condition
\be
\label{DtNBoundaryCond}
\partial_\nu p = Sp \quad \mbox{on}\quad  \G_R,
\en
where $S$ is known as the DtN map from $H^s(\G_R)$ to $H^{s-1}(\G_R)$ defined by
\be
\label{DtNmap}
S\varphi:={\sum\limits_{n=0}^{\infty}} \ '\frac{ {k H_{n}^{(1)}}^{\prime}{(kR)}}{\pi H_{n}^{(1)}(kR)}\int_ {0}^{2 \pi}\varphi(R,\phi)\cos(n(\theta-\phi))d\phi \quad\mbox{for all}\quad \varphi \in H^{s}(\G_R),1/2\leq s\in \R,
\en
or equivalently,
\be
\label{DtNmap1}
S\varphi:=\sum\limits_{n\in\Z}\frac{ {k H_{n}^{(1)}}^{\prime}{(kR)}}{2\pi H_{n}^{(1)}(kR)}\int_ {0}^{2 \pi}\varphi(R,\phi)e^{in(\theta-\phi)}d\phi \quad\mbox{for all}\quad \varphi \in H^{s}(\G_R),1/2\leq s\in \R.
\en
Here and in the sequel, the prime behind the summation means that the first term in the summation is multiplied by $1/2$. The transparent boundary condition (\ref{DtNBoundaryCond}) defines a nonlocal condition for $p$ on $\G_R$ since the Dirichlet data $p$ over the entire boundary $\G_R$ are required to compute the Neumann date $\partial_\nu p$ at a single point $x\in \G_R$. We have the following mapping property for $S$ (see Theorem 3.1 in \cite{HNPX11}).
\begin{lemma}
\label{Sbound}
For any constant $s\ge 1/2$, the DtN map $S$ defined by (\ref{DtNmap}) or (\ref{DtNmap1}) is a bounded linear operator from $H^{s}(\G_{R})$ to $H^{s-1}(\G_R)$, that is, there exists a constant $c>0$ independent of $\varphi\in H^{s}(\G_R)$ such that
\be
\label{DtNcontinuity}
\|S\varphi\|_{H^{s-1}(\G_R)} \leq c\|\varphi\|_{H^s(\G_R)}.
\en
\end{lemma}

Now, the transmission problem (\ref{Navier1})-(\ref{RadiationCond}) can be equivalently replaced by the following nonlocal boundary value problem: Given $p^{inc}$, find $u\in \left(C^2(\Om)\cap C^1(\overline\Om)\right)^2$ and $p\in C^2(\Om_R)\cap C^1(\overline{\Om_R})$ such that
\be
\label{Navier2}
\Delta^{*}u +   \rho \om^2u = 0&&\mbox{in}\quad \Om,\\
\label{Helmholtz2}
\Delta p + k^2p = 0&&\mbox{in}\quad \Om_R,\\
\label{TransCond1.2}
\om^2\rho_fu\cdot \nu -\partial_\nu p-\partial_\nu p^{inc}=0 &&\mbox{on}\quad \G,\\
\label{TransCond2.2}
Tu +\nu p + \nu p^{inc}=0&&\mbox{on}\quad \G,\\
\label{DtNCond}
\partial_\nu p-Sp=0 &&\mbox{on}\quad  \G_R.
\en
The uniqueness result for the nonlocal boundary value problem (\ref{Navier2})-(\ref{DtNCond}) is established in the next Theorem.
\begin{theorem}
\label{uniqueness}
If the surface $\G$ and the material parameter $(\mu,\lambda,\rho)$ are such that there are no traction free solutions, the nonlocal boundary value problem (\ref{Navier2})-(\ref{DtNCond}) has at most one solution.
\end{theorem}
\begin{proof}
The proof is analogous to that of Theorem 3.1 in \cite{GYX} and we omit it here.
\end{proof}

Set $\mathcal{H}^t= \left(H^{t}(\Om)\right)^2\times H^{t}(\Om_R)$. Now we study the weak formulation of (\ref{Navier2})-(\ref{DtNCond}) which reads: Given $p^{inc}$, find $U=(u,p)\in \mathcal{H}^1$ such that
\be
\label{Weak1}
A(U,V)=B(U,V)+b(p,q)=\ell(V),\quad\forall\ V=(v,q)\in\mathcal{H}^1,
\en
where
\ben
\label{B}
B(U,V)&=& a_{1}(u,v)+a_{2}(p,q)+a_{3}(u,q)+a_{4}(p,v),
\enn
and
\ben
\label{a1}
a_{1}(u,v) &=&\int_{\Om}{ \left[\lambda(\nabla\cdot u)(\nabla\cdot\ov{v})+\frac{\mu}{2}\left(\nabla u+(\nabla{u})^T\right):\left(\nabla\ov{v}+(\nabla\ov{v})^T\right)- \rho\om^{2}u \cdot \ov {v}\right]\,dx},\\
\label{a2}
a_{2}(p,q)&=&\int_{\Om_R}{ \left(\nabla p\cdot\nabla\ov{q}-k^2p\ov{q}\right)\,dx},\\
\label{a3}
a_{3}(u,q) &=& \rho_f\om^2\int_{\G}{u\cdot \nu \ov q\,ds},\\
\label{a4}
a_{4}(p,v) &=& \int_{\G}{\nu p\cdot \ov {v}\,ds},\\
\label{b}
b(p,q) &=& -\int_{\G_R}{(Sp)\ov q\,ds}
\enn
are sesquilinear forms defined on $\left(H^{1}(\Om)\right)^2\times \left(H^{1}(\Om)\right)^2$, $H^{1}(\Om_R)\times H^{1}(\Om_R)$, $\left(H^{1}(\Om)\right)^2\times H^{1}(\Om_R)$, $H^{1}(\Om_R)\times \left(H^{1}(\Om)\right)^2$ and $H^{1}(\Om_R)\times H^{1}(\Om_R)$, respectively, and $\ell$ defined by
\ben
\label{lv}
\ell(V)=\int_{\G}{\partial_\nu p^{inc}\ov{q}\,ds}-\int_{\G}{\nu\,p^{inc}\cdot\ov {v}\,ds},
\enn
is a linear functional on $\mathcal{H}^1$ dependent on $\left(p^{inc},\partial_\nu p^{inc}\right)\in H^{1/2}(\G)\times H^{-1/2}(\G).$

\begin{remark}
The double dot notation appeared in the above formulation is understood in the following way. If tensors ${\bf A}$ and ${\bf B}$ have rectangular Cartesian components $a_{ij}$ and $b_{ij}$, $i,j=1,2$, respectively, then the double contraction of ${\bf A}$ and ${\bf B}$ is
\ben
{\bf A}:{\bf B}=\sum\limits_{i = 1}^2 {\sum\limits_{j = 1}^2 {{a_{ij}}{b_{ij}}}}.
\enn
\end{remark}

It follows from the boundedness of the sesquilinear form $B(\cdot,\cdot)$ and (\ref{DtNcontinuity}) that
\begin{theorem}
\label{continuous1}
The sesquilinear form $A(\cdot,\cdot)$ in (\ref{Weak1}) satisfies
\ben
\label{Weak1boundness}
A(U,V)|\leq c\|U\|_{\mathcal{H}^1}\|V\|_{\mathcal{H}^1},\quad\forall\ U,V\in{\mathcal{H}^1},
\enn
where $c>0$ is the continuity constant independent of $U$ and $V$.
\end{theorem}

\begin{theorem}
\label{garding1}
The sesquilinear form $A(\cdot,\cdot)$ in (\ref{Weak1}) satisfies a Garding's inequality taking the form
\ben
\label{Weak1garding}
\mbox{Re}\,\{A(V,V)+(CV,V)_{\mathcal{H}^1}\}\geq \alpha\|V\|_{\mathcal{H}^1}^2,\quad\forall\,V=(v,q)\in{\mathcal{H}^1},
\enn
where $C:\mathcal{H}^1\rightarrow \mathcal{H}^1$ is a compact operator, $(\cdot,\cdot)_{\mathcal{H}^1}$ denotes the inner product on $\mathcal{H}^1$ and $\alpha>0$ is a constant independent of $V$.
\end{theorem}
\begin{proof}
For the sesquilinear form $a_{1}(\cdot,\cdot)$, we know from the Korn's inequality (\cite{H95}) that there exist constants $\alpha>0, \widetilde{\alpha}\ge 0$ such that
\be
\label{a1vv}
a_{1}(v,v)+\rho\om^{2}\left\| v \right\|_{{{\left( {{H^0}(\Om)} \right)}^2}}^2 \geq \alpha \left\| v \right\|_{{{\left( {{H^1}(\Om)} \right)}^2}}^2-\widetilde{\alpha}\left\| v \right\|_{{{\left( {{H^0}(\Om)} \right)}^2}}^2.
\en
The notation $\widetilde{\alpha}$ will be used again in Appendix \ref{appendix1}. For the sesquilinear form $a_{2}(\cdot,\cdot)$, we obtain
\ben
\label{a2qq}
a_{2}(q,q) &=& \left\|q \right\|_{H^1(\Om_R)}^2-(k^2+1)\left\|q \right\|_{H^0(\Om_R)}^2.
\enn
Next, we consider the sesquilinear form $b(\cdot,\cdot)$. Define
\ben
b_{1}(p, q)=\frac{1}{\pi}{\sum\limits_{n=1}^{\infty}}\ n \int_{0}^{2 \pi}\int_ {0}^{2 \pi}p(R,\phi)\overline{q(R,\theta)}\cos(n(\theta-\phi))d\theta d\phi
\enn
and
\ben
b_{2}(p, q)=\frac{kR}{\pi}{\sum\limits_{n=0}^{\infty}} \ '\frac{{H_{n}^{(n-1)}}{(kR)}}{H_{n}^{(1)}(kR)}\int_{0}^{2 \pi}\int_ {0}^{2 \pi}p(R,\phi)\overline{q(R,\theta)}\cos(n(\theta-\phi))d\theta d\phi
\enn
implying that
\be
\label{b=b1-b2}
b(p, q)=b_{1}(p, q)-b_{2}(p, q).
\en
It follows from Theorem 4.2 in \cite{HNPX11} that
\be
\label{b1}
b_{1}(q, q) \ge0,
\en
and
\ben
\label{b2}
\left|\mbox{Re}\,\{b_{2}(q, q)\}\right| \le |b_{2}(q, q)| \le c \left\|q \right\|_{H^{0}(\G_R)}^2.
\enn
The compact operator $C:\mathcal{H}^1\rightarrow \mathcal{H}^1$ can be defined as
\ben
(CU,V):= (\widetilde{\alpha}+\rho\omega^2)\int_{\Omega}\,u\cdot\ov{v}dx+ (k^2+1)\int_{\Omega_R}\,p\,\ov{q}dx -a_{3}(u,q)-a_{4}(p,v)+ b_{2}(p, q),\quad\forall\,V\in{\mathcal{H}^1}.
\enn
This completes the proof.
\end{proof}

Now, the existence result follows immediately from the Fredholm Alternative theorem: Uniqueness implies the existence. As a consequence of Theorem \ref{continuous1} and \ref{garding1}, we have the following theorem.

\begin{theorem}
\label{wellposedness1}
Let the surface $\G$ and the material parameter $(\mu,\lambda,\rho)$ be such that there are no traction free solutions, then the variational equation (\ref{Weak1}) admits a unique solution.
\end{theorem}

\section{Modified nonlocal boundary value problem}
\label{sec:3}
\subsection{Truncated DtN map}
\label{sec:3.1}
In practical computing, one needs to truncate the infinite series of the exact DtN map at a finite order to obtain an approximate DtN map written as
\be
\label{TruncDtNmap}
S^{N}\varphi:={\sum\limits_{n=0}^{N}}\ '\frac{ {k H_{n}^{(1)}}'{(kR)}}{\pi H_{n}^{(1)}(kR)}\int_ {0}^{2 \pi}\varphi(R,\phi)\cos(n(\theta-\phi))d\phi,
\en
or
\be
\label{TruncDtNmap1}
S^N\varphi:=\sum\limits_{n=-N}^N\frac{ {k H_{n}^{(1)}}^{\prime}{(kR)}}{2\pi H_{n}^{(1)}(kR)}\int_ {0}^{2 \pi}\varphi(R,\phi)e^{in(\theta-\phi)}d\phi,
\en
for all $\varphi \in H^{s}(\G_R), \ s\geq1/2$. Obviously, $S^N$ is also a linear bounded operator. Here, the non-negative integer $N$ is called the truncation order of the DtN map. Consequently, we arrive at a truncated nonlocal boundary value problem consisting of (\ref{Navier2})-(\ref{TransCond2.2}) and
\be
\label{TruncDtNCond}
\partial_\nu p = S^{N}p \quad  \text {on } \G_R.
\en

\subsection{Modified variational problem and wellposedness}
\label{sec:3.2}
We now consider the modified variational equation of (\ref{Weak1}) due to the truncation of the DtN map: looking for $U_N=(u_N,p_N) \in{\mathcal{H}^1}$ such that
\be
\label{Weak2}
A^N(U_N,V)=B(U_N,V)+b^N(p_N,q)=\ell(V),\quad \forall\; V=(v,q)\in{\mathcal{H}^1},
\en
where
\ben
b^N(p_N,q)=-\int_{\G_R}{(S^Np_N)\ov q ds}.
\enn

\begin{theorem}
\label{continuousgarding2}
The sesquilinear form $A^N(\cdot,\cdot)$ in (\ref{Weak2}) satisfies
\be
\label{Weak2continuity}
\quad\left|A^N(U,V)\right|\leq c\|U\|_{\mathcal{H}^1}\|V\|_{\mathcal{H}^1},\quad\forall\;U,V\in{\mathcal{H}^1},
\en
and
\be
\label{Weak2garding}
\quad\mbox{Re}\,\{A^N(V,V)+(C^NV,V)_{\mathcal{H}^1}\}\geq \alpha\|V\|_{\mathcal{H}^1}^2,\quad\forall\;V=(v,q)\in{\mathcal{H}^1},
\en
where $C^N:\mathcal{H}^1\rightarrow \mathcal{H}^1$ is a compact operator and $c>0, \alpha>0$ are constants independent of $U, V$.
\end{theorem}
\begin{proof}
It is easy to deduce (\ref{Weak2continuity}) due to the fact that $S^N$ is also bounded. We now show the proof of (\ref{Weak2garding}). Following (\ref{b=b1-b2}), we define
\ben
b_1^N(p, q)=\frac{1}{\pi}{\sum\limits_{n=1}^{N}}\ n \int_{0}^{2 \pi}\int_ {0}^{2 \pi}p(R,\phi)\ov{q(R,\theta)}\cos(n(\theta-\phi))d\theta d\phi
\enn
and
\ben
b_2^N(p, q)=\frac{kR}{\pi}{\sum\limits_{n=0}^{N}} \ '\frac{{H_{n-1}^{(1)}}{(kR)}}{H_{n}^{(1)}(kR)}\int_{0}^{2 \pi}\int_ {0}^{2 \pi}p(R,\phi)\ov{q(R,\theta)}\cos(n(\theta-\phi))d\theta d\phi
\enn
which yields
\ben
b^N(p, q)=b_1^N(p, q)-b_2^N(p, q).
\enn
It follows from Theorem 4.4 in \cite{HNPX11} that
\ben
b_1^N(q, q)\ge0,
\enn
and
\be
\label{b2N}
\left|\mbox{Re}\,\{b_2^N(q, q)\}\right| \le |b_2^N(q, q)| \le c \left\|q \right\|_{H^{0}(\G_R )}^2.
\en
Consequently, by the definition of compact operator $C$ and (\ref{b2N}), we complete the proof of (\ref{Weak2garding}).
\end{proof}

Now we present the uniqueness and existence of solutions for the modified variational equation (\ref{Weak2}) which is the key ingredient of this work and put its proof in Appendix \ref{appendix1}.

\begin{theorem}
\label{wellposedness2}
Let the surface $\Gamma$ and the material parameter $(\mu,\lambda,\rho)$ be such that there are no traction free solutions, then there exists a constant $N_0\ge0$ such that the modified variational equation (\ref{Weak2}) admits a unique solution $(u_N,p_N)\in\mathcal{H}^1$ for $N\geq N_0$.
\end{theorem}

\section{Finite element analysis}
\label{sec:4}
Our main goal in this section is to establishing a priori error estimates (\cite{HW85,HW92,HNPX11,GYX}) for the finite element solution of (\ref{Weak2}) in terms of the finite element mesh size $h$ and the truncation order $N$ in appropriate Sobolev spaces. It has been known that numerical errors induced from the truncation of DtN map are exponentially decaying (\cite{NN06}), and thanks to the error estimate \eqref{EstDiffSSN1}, we are able to derive a new upper bound of numerical errors indicating such effect explicitly.

\subsection{Point estimate for DtN map}
\label{sec:4.1}
We include the point estimate for the difference of $S$ and $S^N$ proposed in \cite{HNPX11} in the next Lemma.
\begin{lemma}
\label{Theorem4.1}
Suppose that the DtN maps $S$ and $S^{N}$ are defined as in (\ref{DtNmap}) and (\ref{TruncDtNmap}), respectively. Then, for given $\varphi \in H^{s}(\G_R),s \in \mathbb{R}$, there holds, for all $t\geq 0$,
\be
\label{EstDiffSSN}
\left\|(S-S^{N})\varphi\right\|_{H^{s-1}(\G_R)}\leq c\frac{\epsilon(N,\varphi)}{N^{t}}\|\varphi\|_{H^{s+t}(\G_R)},
\en
where $c>0$ is a constant independent of $\varphi$ and $N$, $\epsilon (N,\varphi )$ is a function of the truncation order $N$ and $\varphi$ for given values of $s$ and $t$, generated by the addition of leading terms to the summation with positive terms for the construction of the norm on the space $H^{s+t}(\G_R)$, satisfying $\epsilon(N,\varphi)\le 1$ and $\epsilon(N,\varphi)\rightarrow 0$ as $N\rightarrow \infty$ for all $\varphi\in H^{s}(\G_R)$.
\end{lemma}

From the estimation (\ref{EstDiffSSN}) we can observe that the truncation error between $S$ and $S^N$ tends to zero as $N\rightarrow\infty$. However, when $t=0$, the attenuation property depends on $\epsilon(N,\varphi)$ and we know few about this function. We now give a new and more apparent truncation error estimate for some special $\varphi\in H^{s}(\G_R)$ which will be used in the subsequent discussions.
\begin{theorem}
\label{Theorem4.2}
Suppose that the DtN maps $S$ and $S^{N}$ are defined as in (\ref{DtNmap1}) and (\ref{TruncDtNmap1}), respectively. Let $p$ be a solution of Helmholtz equation outside $\Om$ satisfying either (\ref{DtNBoundaryCond}) or (\ref{TruncDtNCond}). Then there exists a $N_0>0$ such that for all $N>N_0$,
\be
\label{EstDiffSSN1}
\left\|(S-S^N)p\right\|_{H^{s-1}(\G_R)}\leq cq^N\|p\|_{H^{s+t+1/2}(\Om_R)},\quad\forall\; t\ge 0,\;s\ge 1/2,
\en
where $0<q<1$ is a constant independent of $N$.
\end{theorem}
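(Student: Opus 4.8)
The plan is to pass to Fourier coefficients on $\G_R$ and to exploit the geometric gap between $\G$ and $\G_R$. Since $\ov\Om$ is compact and contained in $B_R$, I fix an intermediate radius $R_1$ with $\ov\Om\subset B_{R_1}$ and $R_1<R$; then $p$ solves the Helmholtz equation on the entire annulus $A=\{x\in\R^2:R_1<|x|<R\}\subset\Om_R$, so separation of variables gives $p(r,\theta)=\sum_{n\in\Z}p_n(r)e^{in\theta}$ with $p_n(r)=\alpha_n J_n(kr)+\beta_n Y_n(kr)$ for $R_1<r<R$. Expanding the definitions (\ref{DtNmap1}) and (\ref{TruncDtNmap1}) in this basis, the operators act diagonally with symbol $\sigma_n=k{H_n^{(1)}}'(kR)/H_n^{(1)}(kR)$, so that $(S-S^N)p=\sum_{|n|>N}\sigma_n\,p_n(R)\,e^{in\theta}$ on $\G_R$ and hence
\[
\|(S-S^N)p\|_{H^{s-1}(\G_R)}^2\le c\sum_{|n|>N}(1+n^2)^{s-1}|\sigma_n|^2\,|p_n(R)|^2 .
\]
Thus only the tail modes $|n|>N$ enter, and the task reduces to showing that their boundary values $p_n(R)$ are exponentially small relative to a bulk norm of $p$ on $A$.

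Next I would identify the high modes from the boundary condition. If $p$ satisfies (\ref{DtNBoundaryCond}), matching $p_n'(R)=\sigma_n p_n(R)$ and using the Wronskian $J_n(z)Y_n'(z)-J_n'(z)Y_n(z)=2/(\pi z)$ forces $\beta_n=i\alpha_n$, i.e. $p_n(r)=\alpha_n H_n^{(1)}(kr)$; if $p$ satisfies (\ref{TruncDtNCond}), then for $|n|>N$ the right-hand side has no $n$-th mode, so $p_n'(R)=0$ and $\beta_n=-\alpha_n J_n'(kR)/Y_n'(kR)$. In both cases the mode carries a genuinely nonzero $Y_n$ component. The decisive analytic input is the large-order behaviour at fixed argument, namely $Y_n(kr)\sim-\pi^{-1}(n-1)!\,(2/kr)^{n}$, $J_n(kr)\sim(n!)^{-1}(kr/2)^n$ and $\sigma_n\sim-n/R$: the $Y_n$-part grows like $r^{-n}$ as $r$ decreases, so $|p_n(r)|^2$ is dominated by the inner endpoint of $A$ and is exponentially larger there than at $r=R$. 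Quantitatively I aim to prove that there is $N_0$ so that for all $|n|>N_0$,
\[
|p_n(R)|^2\le c\,n\Big(\frac{R_1}{R}\Big)^{2n}\int_{R_1}^{R}|p_n(r)|^2\,r\,dr ,
\]
the factor $(R/R_1)^{2n}/n$ arising because the integrand behaves like $r^{1-2n}$ and concentrates at $r=R_1$.

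Combining the two displays with the bound $|\sigma_n|\le cn$ gives
\[
\|(S-S^N)p\|_{H^{s-1}(\G_R)}^2\le c\sum_{|n|>N} n^{2s+1}\Big(\frac{R_1}{R}\Big)^{2n}\int_{R_1}^{R}|p_n(r)|^2\,r\,dr .
\]
Writing $(R_1/R)^{2n}=(R_1/R)^{2N}(R_1/R)^{2(n-N)}$ and choosing any $q$ with $R_1/R<q<1$, the weight $\sup_{n>N} n^{2s+1}(R_1/R)^{2(n-N)}$ grows only polynomially in $N$ and is therefore absorbed, for $N$ large, into $(R_1/R)^{2N}$ to yield $q^{2N}$. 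Since $\sum_{|n|>N}\int_{R_1}^R|p_n(r)|^2 r\,dr\le c\|p\|_{L^2(A)}^2\le c\|p\|_{H^{s+t+1/2}(\Om_R)}^2$, the estimate (\ref{EstDiffSSN1}) follows with $q\in(R_1/R,1)$; note the right-hand norm may be taken as large as convenient, so any $t\ge0$ is admissible.

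The main obstacle I anticipate is making the Bessel asymptotics uniform and two-sided in $n$, rather than merely leading-order, so that the pointwise inequality for $|p_n(R)|^2$ holds simultaneously for all $n>N_0$ with a single constant; this is most safely done through the uniform (Debye--Olver) large-order expansions, and it is here that the two boundary conditions must be treated on an equal footing. In particular, for the truncated condition one must verify that $\beta_n$ does not vanish or degenerate---equivalently that the $Y_n$ component genuinely dominates the annular integral---since a mode proportional to $J_n$ alone would grow toward $r=R$ and destroy the exponential gain. Establishing this nondegeneracy uniformly in $n$, together with the clean interchange of summation and the radial integral, is the technical heart of the argument.
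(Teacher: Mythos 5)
Your proposal is sound and reaches the stated estimate, but by a genuinely different route than the paper. The paper expands $p$ in Hankel modes normalized at $R$ ($H_n^{(1)}$ alone when (\ref{DtNBoundaryCond}) holds; $H_n^{(1)}$ and $H_n^{(2)}$ when (\ref{TruncDtNCond}) holds), bounds the tail Fourier coefficients on $\G_R$ by traces of $p$ on intermediate circles of radii $\G^++\varepsilon$, $\G^++2\varepsilon$ via ratio estimates of the type $|H_n^{(1)}(kR)/H_n^{(1)}(k(\G^++2\varepsilon))|\le cq^{|n|}$, and then converts to the volume norm $\|p\|_{H^{s+t+1/2}(\Om_R)}$ by the trace theorem; its truncated case is dispatched with ``analogous to Case 1.'' You instead work in the $J_n/Y_n$ basis on a fixed annulus $R_1<r<R$, let the boundary condition dictate the mode structure (your Wronskian computation showing that (\ref{DtNBoundaryCond}) forces $\beta_n=i\alpha_n$, i.e.\ recovers $H_n^{(1)}$, is a clean self-contained replacement for the paper's appeal to the outgoing extension of $p$; (\ref{TruncDtNCond}) gives the Neumann relation $p_n'(R)=0$ for $|n|>N$), and control $|p_n(R)|^2$ by the $L^2$ mass of the mode over the annulus, finishing with Parseval. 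This buys you a slightly stronger result, $cq^N\|p\|_{L^2(\Om_R)}$ on the right, from which the full $t\ge 0$, $s+t+1/2$ flexibility of (\ref{EstDiffSSN1}) is immediate, and a unified, explicit treatment of both cases, including the nondegeneracy question that the paper's Case 2 glosses over (there the rescue is the Wronskian of $H_n^{(1)},H_n^{(2)}$, which makes the factor $\left|1-{H_n^{(1)}}'(kR)H_n^{(2)}(kR)/(H_n^{(1)}(kR){H_n^{(2)}}'(kR))\right|$ exponentially small). The uniformity-of-asymptotics gap you flag is real, but it is exactly the gap the paper itself incurs—its fixed-argument ``$\sim$'' expansions are likewise applied with constants claimed uniform for all $|n|>N_0$—so your plan is at no disadvantage; and your cancellation worry in the truncated case resolves favorably, since $\beta_n/\alpha_n\approx J_n(kR)/Y_n(kR)$ makes the $J_n$ and $Y_n$ contributions add constructively, and in any event the annular integral is dominated by radii $r\le(1-\delta)R$ where the $Y_n$ part exceeds the $J_n$ part by a factor $(R/r)^{2n}$.
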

\begin{proof}
{\bf Case 1:} Suppose that $p$ satisfies (\ref{DtNBoundaryCond}).

We know that $p$ admits an expansion taking the form
\ben
p(x)=\sum_{n\in\Z}p_n\frac{H_n^{(1)}(k|x|)}{H_n^{(1)}(k|R|)}e^{in\theta}\quad\mbox{for all} \quad |x|>\G^+.
\enn
Here, $\G^+:=\max\{|x|, x\in\G\}$. It easily follows that
\ben
\left\|(S-S^N)p\right\|_{H^{s-1}(\G_R)} &=& \left\{\sum_{|n|>N} (1+n^2)^s|p_n|^2
\frac{k^2}{1+n^2}\left|\frac{{H_n^{(1)}}^{\prime}(kR)}{H_n^{(1)}(kR)}\right|^2\right\}^{1/2}\\
&\leq& c\left\{\sum_{|n|>N}(1+n^2)^s|p_n|^2\right\}^{1/2}\\
&\leq& c\sum_{|n|>N}(1+n^2)^{s/2}|p_n|.
\enn
For some small $\varepsilon>0$, we have
\ben
(1+n^2)^{s/2}|p_n|\left|\frac{H_n^{(1)}(k(\G^++2\varepsilon))}{H_n^{(1)}(kR)}\right|
&=& (1+n^2)^{(s+t)/2}|p_n|\left|\frac{H_n^{(1)}(k(\G^++\varepsilon))}{H_n^{(1)}(kR)}\right|\\
&\times& (1+n^2)^{-t/2}\left|\frac{H_n^{(1)}(k(\G^++2\varepsilon))}{H_n^{(1)}(k(\G^++\varepsilon))}\right|\\
&\le& \left\{\sum_{n\in\Z}(1+n^2)^{s+t}|p_n|^2\left|\frac{H_n^{(1)}(k(\G^++\varepsilon))}{H_n^{(1)}(kR)}\right|^2\right\}^{1/2}\\
&\times& \left\{\sum_{n\in\Z}(1+n^2)^{-t}\left|\frac{H_n^{(1)}(k(\G^++2\varepsilon))}{H_n^{(1)}(k(\G^++\varepsilon))}\right|^2\right\}^{1/2}.
\enn
Note that for sufficiently large $|n|$ and $z>0$ (see e.g. \cite{AS65}),
\ben
H_{|n|}^{(1)}(z)\sim -i\sqrt{\frac{2}{|n|\pi}}\left(\frac{e}{2|n|}\right)^{-|n|}z^{-|n|}.
\enn
Then we deduce that there exists a $N_0>0$ such that for $|n|>N_0$,
\ben
(1+n^2)^{-t}\left|\frac{H_n^{(1)}(k(\G^++2\varepsilon))}{H_n^{(1)}(k(\G^++\varepsilon))}\right|^2 \le c(1+n^2)^{-t}\left(\frac{\G^++\varepsilon}{\G^++2\varepsilon}\right)^n,
\enn
and
\ben
\left|\frac{H_n^{(1)}(kR)}{H_n^{(1)}(k(\G^++2\varepsilon))}\right| \le cq^{|n|},
\enn
where $q:=(\G^++2\varepsilon)/R<1$. These further imply that
\ben
\sum_{n\in\Z}(1+n^2)^{-t}\left|\frac{H_n^{(1)}(k(\G^++2\varepsilon))}{H_n^{(1)}(k(\G^++\varepsilon))}\right|^2 <\infty.
\enn
Then for $|n|>N_0$, we deduce from the trace theorem that
\ben
(1+n^2)^{s/2}|p_n|\le cq^{|n|}\|p\|_{H^{s+t}(\G_{\G^++\varepsilon})}\le cq^{|n|}\|p\|_{H^{s+t+1/2}(\Om_R)}.
\enn
We conclude that
\ben
\left\|(S-S^N)p\right\|_{H^{s-1}(\Gamma_{R})}&\le& c\sum_{|n|>N}q^{|n|}\|p\|_{H^{s+t+1/2}(\Om_R)}\\
&\le& cq^N\|p\|_{H^{s+t+1/2}(\Om_R)}.
\enn

{\bf Case 2:} Suppose that $p$ satisfies (\ref{TruncDtNCond}).

We know that $p$ admits an expansion taking the form
\ben
p(x)=\sum_{n\in\Z}\left(p_n^{(1)}\frac{H_n^{(1)}(k|x|)}{H_n^{(1)}(k|R|)}+p_n^{(2)}\frac{H_n^{(2)}(k|x|)}{H_n^{(2)}(k|R|)}\right)e^{in\theta}\quad\mbox{for all} \quad |x|>\G^+.
\enn
According to the condition (\ref{TruncDtNCond}), we conclude that
\ben
p_n^{(1)}\frac{{H_n^{(1)}}^{\prime}(kR)}{H_n^{(1)}(kR)} +p_n^{(2)}\frac{{H_n^{(2)}}^{\prime}(kR)}{H_n^{(2)}(kR)}=
\begin{cases}
(p_n^{(1)}+p_n^{(2)})\frac{{H_n^{(1)}}^{\prime}(kR)}{H_n^{(1)}(kR)},\quad &|n|\le N ,\cr 0,\quad &|n|>N,\nonumber
\end{cases}
\enn
which implies that
\ben
p_n^{(2)}=
\begin{cases}
0,\quad &|n|\le N ,\cr -p_n^{(1)}\frac{{H_n^{(1)}}^{\prime}(kR)}{H_n^{(1)}(kR)} \frac{H_n^{(2)}(kR)}{{H_n^{(2)}}^{\prime}(kR)},\quad &|n|>N.\nonumber
\end{cases}
\enn
It follows that
\ben
\left\|(S-S^N)p\right\|_{H^{s-1}(\G_R)} &=& \left\{\sum_{|n|>N} (1+n^2)^s|p_n^{(1)}+p_n^{(2)}|^2
\frac{k^2}{1+n^2}\left|\frac{{H_n^{(1)}}^{\prime}(kR)}{H_n^{(1)}(kR)}\right|^2\right\}^{1/2}\\
&\leq& c\left\{\sum_{|n|>N}(1+n^2)^s|p_n^{(1)}+p_n^{(2)}|^2\right\}^{1/2}\\
&\leq& c\sum_{|n|>N}(1+n^2)^{s/2}|p_n^{(1)}| \left|1-\frac{{H_n^{(1)}}^{\prime}(kR)}{H_n^{(1)}(kR)} \frac{H_n^{(2)}(kR)}{{H_n^{(2)}}^{\prime}(kR)}\right|.
\enn
The proof of the remaining assertions is analogous to that of Case 1.
\end{proof}

\subsection{Galerkin formulation}
\label{sec:4.2}
Let $\mathcal{H}_{h}=({S}_{h},S'_{h})\subset \mathcal{H}^1$ be the standard finite element space. Now we consider the Galerkin formulation of (\ref{Weak2}): Given $p^{inc}$, find ${U}_h=({u}_h,p_h)\in \mathcal{H}_h$, ${u}_h=(u_x^h,u_y^h)^\top$ such that
\begin{equation}
\label{Weak3}
A^N({U}_h,{V}_h)=\ell({V}_h),\quad \forall\; {V}_h=({ v}_h,q_h)\in\mathcal{H}_h,\;{v}_h=(v_x^h,v_y^h)^\top.
\end{equation}
It can be shown (\cite{HW04}) that the discrete sesquilinear form $A^N(\cdot,\cdot)$ satisfies the BBL-condition as implication of the following:

{\it G{\aa}rding's inequality + Uniqueness + Approximation property of $\mathcal{H}_{h}\Rightarrow$ BBL-condition.}

\begin{theorem}
\label{Galerkin}
Let the surface $\G$ and the material parameter $(\mu,\lambda,\rho)$ be such that there are no traction free solutions and suppose that the finite element space $\mathcal{H}_h\subset\mathcal{H}^1$ satisfies the standard approximation property, then there exist constants $N_{0}\geq 0$ and $h_{0}>0$ such that for any  $h\in(0,h_0]$ and $N\geq N_0$, $A^N(\cdot,\cdot)$ satisfies the BBL condition in the form
\be
\label{BBLcondition}
\sup_{(0,0)\neq {W}_{h}\in \mathcal{H}_{h}} \frac{|{A}^{N}({V}_{h},{ W}_{h})|}{\|{W}_{h})\|_ {\mathcal{H}^1}}\geq \gamma\|{ V}_{h}\|_{\mathcal{H}^1},\quad \forall\;{V}_{h}\in\mathcal{H}_{h}.
\en
Here $\gamma>0$ is the inf-sup constant independent of $h$.
\end{theorem}

From the BBL condition (\ref{BBLcondition}), we are allowed to derive a priori error estimates for the finite element solution ${U}_{h}\in\mathcal{H}_h$.

\subsection{Asymptotic error estimates}
\label{sec:4.3}
In this subsection, we mainly derive a reasonable priori error estimates on appropriate Sobolev spaces including error effects of both the numerical discretization and the truncation of infinite series. We first establish an upper bound of numerical errors analogous to the well-known C\'ea's lemma in the positive definite case.

\begin{theorem}
\label{Theorem5.2}
There exist constants  $h_{0}>0$ and $N_0\ge0$ such that for any  $h\in(0,h_0]$ and $N\geq N_0$
\be
\label{uppererror}
\|{U}-{U}_{h}\|_{\mathcal{H}^1}\leq c\left\{\inf_{{V}_{h}\in \mathcal{H}_{h}}\|{U}-{V}_{h}\|_ {\mathcal{H}^1}+\sup_{0\neq w_2\in {S}'_{h}} \frac{|(b(p,w_2)-b^{N}(p,w_2)|}{\|w_2\|_{H^{1}(\Om_R)}}\right\}
\en
where $c>0$ is a constant independent of $h$ and $N$.
\end{theorem}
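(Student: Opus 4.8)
The plan is to carry out a first--Strang--lemma argument adapted to this indefinite, non-coercive problem, replacing the usual coercivity by the discrete inf--sup (BBL) estimate (\ref{BBLcondition}) of Theorem \ref{Theorem5.1}, which is available precisely for $h\le h_0$ and $N\ge N_0$. Throughout, let ${\bf U}=({\bf u},p)$ denote the exact solution of (\ref{Weak1}) and ${\bf U}_h=({\bf u}_h,p_h)$ the Galerkin solution of (\ref{Weak3}), and write a generic test function as ${\bf W}_h=({\bf w}_1,w_2)\in\mathcal{H}_h$ with $w_2\in S'_h$. I would fix an arbitrary ${\bf V}_h\in\mathcal{H}_h$ and apply the BBL condition to the discrete function ${\bf U}_h-{\bf V}_h\in\mathcal{H}_h$, obtaining
\[
\gamma\,\|{\bf U}_h-{\bf V}_h\|_{\mathcal{H}^1}\le \sup_{({\bf 0},0)\neq{\bf W}_h\in\mathcal{H}_h}\frac{|A^N({\bf U}_h-{\bf V}_h,{\bf W}_h)|}{\|{\bf W}_h\|_{\mathcal{H}^1}}.
\]

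The crucial step is to rewrite the numerator so that the truncation enters only through $b-b^N$. Since ${\bf W}_h\in\mathcal{H}_h\subset\mathcal{H}^1$, the two formulations are consistent with the same data, so $A^N({\bf U}_h,{\bf W}_h)=\ell({\bf W}_h)=A({\bf U},{\bf W}_h)$. Hence, inserting and subtracting $A^N({\bf U},{\bf W}_h)$,
\[
A^N({\bf U}_h-{\bf V}_h,{\bf W}_h)=\big[A({\bf U},{\bf W}_h)-A^N({\bf U},{\bf W}_h)\big]+A^N({\bf U}-{\bf V}_h,{\bf W}_h).
\]
Because $A$ and $A^N$ share the same interior/interface form $B(\cdot,\cdot)$ and differ only in the artificial-boundary term, the bracket collapses to the consistency (``variational-crime'') contribution $b(p,w_2)-b^N(p,w_2)$, while the remaining term is the genuine approximation part.

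It then remains to bound the two pieces. The continuity estimate (\ref{Weak2continuity}) of Theorem \ref{Theorem4.3} gives $|A^N({\bf U}-{\bf V}_h,{\bf W}_h)|\le c\,\|{\bf U}-{\bf V}_h\|_{\mathcal{H}^1}\|{\bf W}_h\|_{\mathcal{H}^1}$, and since $\|w_2\|_{H^1(\Om_R)}\le\|{\bf W}_h\|_{\mathcal{H}^1}$ the consistency term is dominated by $\big(\sup_{0\neq w_2\in S'_h}|b(p,w_2)-b^N(p,w_2)|/\|w_2\|_{H^1(\Om_R)}\big)\|{\bf W}_h\|_{\mathcal{H}^1}$. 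Dividing by $\|{\bf W}_h\|_{\mathcal{H}^1}$, taking the supremum, and combining with the BBL bound yields a bound for $\|{\bf U}_h-{\bf V}_h\|_{\mathcal{H}^1}$; the triangle inequality $\|{\bf U}-{\bf U}_h\|_{\mathcal{H}^1}\le\|{\bf U}-{\bf V}_h\|_{\mathcal{H}^1}+\|{\bf V}_h-{\bf U}_h\|_{\mathcal{H}^1}$ followed by taking the infimum over ${\bf V}_h\in\mathcal{H}_h$ then gives (\ref{uppererror}). The argument is essentially careful bookkeeping once the BBL condition is in hand; the one point demanding attention is the isolation of the consistency error --- one must check that testing the truncated discrete equation against the exact solution leaves exactly $b-b^N$ and no spurious remainder (which follows from the common part $B$ of the two sesquilinear forms), and that the inf--sup constant $\gamma$ furnished by Theorem \ref{Theorem5.1} is independent of $h$, so that the final constant $c$ in (\ref{uppererror}) does not degenerate as $h\to 0$.
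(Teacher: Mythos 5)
Your proposal is correct and takes essentially the same approach as the paper's proof: both apply the BBL condition of Theorem \ref{Theorem5.1} to ${\bf U}_h-{\bf V}_h$, isolate the consistency error through the identity $A^N({\bf U}_h-{\bf U},{\bf W}_h)=b(p,w_2)-b^N(p,w_2)$, bound the approximation part via the continuity of $A^N$ from Theorem \ref{Theorem4.3}, and finish with the triangle inequality and an infimum over ${\bf V}_h$. The only difference is cosmetic: the paper obtains that identity by subtracting the exact and discrete variational equations directly, whereas you obtain it by the insert-and-subtract step $A^N({\bf U}_h,{\bf W}_h)=\ell({\bf W}_h)=A({\bf U},{\bf W}_h)$, which is the same computation.
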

\begin{proof}
For the proof of this lemma, we refer to \cite{HNPX11,GYX}.
\end{proof}

From the estimate (\ref{uppererror}), we can see that the numerical errors is constructed by two single terms. We study the first term correlated with the finite element meshsize $h$ by using the approximation theory, and the second term dependent on the truncation order $N$ by employing the point estimate (\ref{EstDiffSSN1}). In the following, starting with the estimate (\ref{uppererror}), we derive a priori error estimates measured in $\mathcal{H}^1$-norm and $\mathcal{H}^0$-norm, respectively to conclude this section.

\begin{theorem}
\label{H1estimate}
Suppose that ${U}\in \mathcal{H}^{t}$ for $2\leq t \in \mathbb{R}$. Then there exist constants  $h_0>0$ and $N_0\geq 0$ such that for any  $h\in(0,h_0]$ and $N\geq N_0$
\be
\label{energyest}
\|{U}-{U}_{h}\|_ {\mathcal{H}^1}\leq c \left\{ h^{t-1}\|{ U}\|_{\mathcal{H}^{t}}+q^N\|p\|_{{H}^{t}(\Omega_{R})}\right\},
\en
where $c>0$ and $0<q<1$ are constants independent of $h$ and $N$.
\end{theorem}
\begin{proof}
We have known from Theorem \ref{Theorem5.2} that
\be
\label{5-3-1}
\|{U}-{U}_{h}\|_{\mathcal{H}^1}\leq c\left\{\inf_{{V}_{h}\in \mathcal{H}_{h}}\|{U}-{V}_{h}\|_{\mathcal{H}^1}+\sup_{0\neq w_2\in {S}'_{h}} \frac{|b(p,w_2)-b^{N}(p,w_2)|}{\|w_2\|_{H^{1}(\Om_R)}}\right\},
\en
where $c>0$ is a positive constant. The approximation property of the finite element space $\mathcal{H}_{h}$ gives
\be
\label{5-3-2}
\inf_ {{V}_{h}\in \mathcal{H}_{h}}\|{U}-{V}_{h}\|_ {\mathcal{H}^1}\leq c h^{t-1}\|{U}\|_{\mathcal{H}^{t}},
\en
where $c>0$ is a positive constant independent of $h$. We now consider the second term in (\ref{5-3-1}). The trace theorem implies that there exists a bounded linear operator $\gamma:{H}^1(\Omega_{R})\rightarrow {H}^{1/2}(\G_R)$ such that
\be
\label{5-3-3}
\frac{|b(p,w_2)-b^{N}(p,w_2)|}{\|w_2\|_{H^{1}(\Om_R)}}=\frac{|\langle (S-S^{N})\gamma p,\gamma w_2\rangle_{\G_R}|}{\|w_2\|_{H^{1}(\Om_R)}}=\frac{|\langle \gamma^{*}(S-S^{N})\gamma p, w_2\rangle_{\Omega_R}|}{\|w_2\|_{H^{1}(\Om_R)}},
\en
where $\langle \cdot,\cdot\rangle_{\G_R}$ is the standard $L^{2}$ duality pairing between $H^{-1/2}(\G_R)$ and $H^{1/2}(\G_R),$ and $\gamma^{*}:H^{-1/2}(\G_R)\rightarrow ({H}^{1}(\Om_R))'$ is the
adjoint operator of $\gamma$. Therefore, we have
\be
\label{5-3-4}
\sup_{0\neq w_2\in {S}'_{h}} \frac{|b(p,w_2)-b^{N}(p,w_2)|}{\|w_2\|_{H^{1}(\Om_R)}}
&=& \sup_{0\neq w_2\in {S}'_{h}} \frac{|\langle \gamma^{*}(S-S^{N})\gamma p, w_2\rangle_{\Om_R}|}{\|w_2\|_{H^{1}(\Omega_{R})}}\nonumber\\
&=& \|\gamma^{*}(S-S^{N})\gamma p\|_{ ({H}^{1}(\Om_R))'}\nonumber\\
&\leq& cq^N\|p\|_{{H}^{t}(\Om_R)}
\en
because of Theorem \ref{Theorem4.2} and the boundedness of operators $\gamma$ and $\gamma^*$. The proof is hence established by following a
combination of (\ref{5-3-2}) and (\ref{5-3-4}).
\end{proof}

We now extend the error estimate in the energy space to the one measured in the $\mathcal{H}^0$ space.

\begin{theorem}
\label{H0estimate}
Suppose that ${U}\in \mathcal{H}^{t}$ for $2\leq t \in \mathbb{R}$. Then there exist constants  $h_{0}>0$ and $N_{0}\geq 0$ such that for any  $h\in(0,h_{0}]$ and $N\geq N_{0}$
\be
\label{L2est}
\|{U}-{U}_{h}\|_ {\mathcal H^0}\leq c \left\{ h^{t}\|{ U}\|_{\mathcal{H}^{t}}+q^N\|p\|_{{H}^{t}(\Omega_{R})}\right\}
\en
where $c>0$ and $0<q<1$ are constants independent of $h$ and $N$.
\end{theorem}
\begin{proof}
Let ${E}=({e}_{1},e_{2})={U}-{U}_h$ be the finite element error. Then we have
\be
\label{5-4-1}
B({E},{V}_h)+b^{N}(e_{2},v_2)+b(p,v_2)-b^N(p,v_2)=0,\quad\forall\ {V}_h=({ v}_1,v_2)\in\mathcal{H}_{h}.
\en
Now, we consider the following boundary value problem: Find ${W}=({ w}_1,w_2)$ satisfying
\be
\label{duality1}
\Delta^{*}{w}_1 +   \rho \omega^2{w}_1 =e_1 &&\text{in}\quad \Omega,\\
\label{duality2}
\Delta w_2 + k^2w_2 = e_2 &&\text{in}\quad {\Omega_R},\\
\label{duality3}
\rho_f\omega^2{w}_1\cdot \nu-\partial_\nu w_2=0 && \text{on}\quad \Gamma,\\
\label{duality4}
{T}{w}_1 +\nu\, w_2=0 && \text{on}\quad \Gamma,\\
\label{duality5}
\partial_\nu w_2- Sw_2=0 && \text {on}\quad  \Gamma_{R}.
\en
Let ${W}=(w_1,w_2)$ be a weak solution of nonlocal boundary value problem (\ref{duality1})-(\ref{duality5}), hence ${W}$ satisfies
\be
\label{5-4-2}
{A}^{N}({V},{W})+b(v_2,w_2)-b^{N}(v_2,w_2)=({w}_1,{ e}_1)_{\left(L^2(\Omega)\right)^2}+(w_2,e_2)_{L^2(\Omega_R)},\quad \forall\; {V}=({v}_1,v_2)\in{\mathcal{H}^1}.
\en
Replacing ${V}$ by ${E}$ in (\ref{5-4-2}) gives
\be
\label{5-4-3}
{A}^{N}({E},{W})+b(e_2,w_2)-b^{N}(e_2,w_2)=({e}_1,{ e}_1)_{\left(L^2(\Omega)\right)^2}+(e_2,e_2)_{L^2(\Omega_R)}=\|{E}\|_{\mathcal H^0}^2.
\en
Then subtracting (\ref{5-4-3}) from (\ref{5-4-1}) leads to, for ${V}_{h}\in\mathcal{H}_{h},$
\be
\label{5-4-4}
\|{E}\|_{\mathcal H^0}^2=A^N({E},{W}-{ V}_h)+b(e_{2},w_2)-b^{N}(e_{2},w_2)+b^{N}(p,v_2)-b(p,v_2).
\en
Theorem \ref{continuousgarding2}, the approximation property of $\mathcal{H}_{h}$ and the regularity theory imply that
\be
\label{5-4-5}
|\textit{A}^{N}({E},{W}-{V}_{h})|&\leq& c\|{E}\|_{\mathcal{H}^1}\|{W}-{V}_{h}\|_{\mathcal{H}^1}\nonumber\\
&\leq& ch\|{E}\|_{\mathcal{H}^1}\|{W}\|_{\mathcal{H}^{2}}\nonumber\\
&\leq& ch\|{E}\|_{\mathcal{H}^1}\|{E}\|_{\mathcal{H}^{0}},
\en
where $c>0$ is a constant. Following the same argument in Theorem \ref{H1estimate} and choosing $t = 2$, we arrive at, by the regularity theory,
\be
\label{5-4-6}
|b(e_{2},w_2)-b^{N}(e_{2},w_2)|&\leq& \|\gamma^{*}(S-S^{N})\gamma e_2\|_{ ({H}^{2}(\Omega_{R}))'}\|w_2\|_{H^{2}(\Omega_{R})} \nonumber\\
&\leq& c\|(S-S^{N})\gamma e_2\|_{H^{-3/2}(\Gamma_{R})}\|{W}\|_{\mathcal{H}^{2}}\nonumber\\
&\leq& cq^N\|e_2\|_{{H}^{1}(\Omega_{R})}\|{E}\|_{\mathcal{H}^{0}}.
\en
Similarly, we also have
\ben
|b^{N}(p,w_2-v_2)-b(p,w_2-v_2)|&\leq& \|\gamma^{*}(S-S^{N})\gamma p\|_{ ({H}^{1}(\Omega_{R}))'}\|w_2-v_2\|_{H^{1}(\Omega_{R})} \nonumber\\
&\leq& c\|(S-S^{N})\gamma p\|_{H^{-1/2}(\Gamma_{R})}\|{W}-{V}_{h}\|_{\mathcal{H}^1}\nonumber\\
&\leq& chq^N\|\gamma p\|_{{H}^{t-1/2}(\Gamma_{R})} \|{W}\|_{\mathcal{H}^{2}}\nonumber\\
&\leq& chq^N\|p\|_{{H}^{t}(\Omega_{R})}\|{E}\|_{\mathcal{H}^{0}},
\enn
and
\ben
|b(p,w_2)-b^{N}(p,w_2)|&\leq& \|\gamma^{*}(S-S^{N})\gamma p\|_{ ({H}^{2}(\Omega_{R}))'}\|w_2\|_{H^{2}(\Omega_{R})} \nonumber\\
&\leq& c\|(S-S^{N})\gamma p\|_{H^{-3/2}(\Gamma_{R})}\|{W}\|_{\mathcal{H}^{2}}\nonumber\\
&\leq& cq^N\|p\|_{{H}^{t}(\Omega_{R})}\|{E}\|_{\mathcal{H}^{0}}.
\enn
Thus, by the triangular inequality we know
\be
\label{5-4-7}
|b^{N}(p,v_2)-b(p,v_2)|&\leq& |b^{N}(p,w_2-v_2)-b(p,w_2-v_2)|+|b^{N}(p,w_2)-b(p,w_2)|\nonumber\\
&\leq& c_1hq^N\|p\|_{H^{t}(\Omega_{R})}\|{E}\|_{\mathcal{H}^{0}} +c_2q^N\|p\|_{H^{t}(\Omega_{R})}\|{E}\|_{\mathcal{H}^{0}}.
\en
Therefore, by the combination of the inequalities (\ref{5-4-4})-(\ref{5-4-7}) and (\ref{energyest}) we have
\ben
\|{E}\|_{\mathcal H^0} &=& A^N({E},{W}-{ V}_h)+b(e_{2},w_2)-b^{N}(e_{2},w_2)+b^{N}(p,v_2)-b(p,v_2) \nonumber\\
&\le& c h\left\{ h^{t-1}\|{U}\|_{\mathcal{H}^{t}}+q^N\|p\|_{{H}^{t}(\Omega_{R})}\right\}
+cq^N\left\{ h^{t-1}\|{U}\|_{\mathcal{H}^{t}} +q^N\|p\|_{{H}^{t}(\Omega_{R})}\right\}\nonumber\\
&+& chq^N\|p\|_{{H}^{t}(\Omega_{R})}
+cq^N\|p\|_{{H}^{t}(\Omega_{R})}.
\enn
Finally, according to the fact that $h\in(0,h_{0}]$ and $N\geq N_{0}$, we arrive at (\ref{L2est}).
\end{proof}

\section{Numerical experiments}
\label{sec:5}
\begin{figure}
\centering
{\includegraphics[scale=0.3]{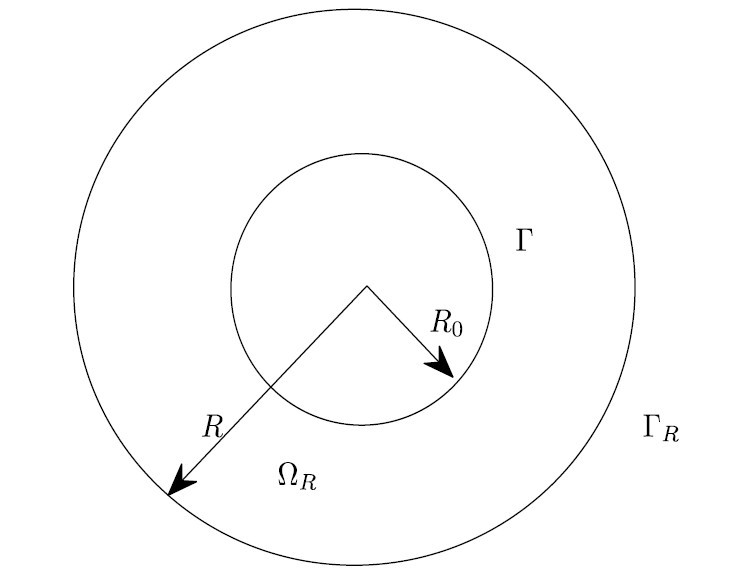}}
\caption{The computational domain of the model problem.}\label{fig2}
\end{figure}

In this section, we present several numerical tests to validate our theoretical results, and we always take the parameters $\omega=1$, $\mu=1$, $\lambda=1$, $\rho=1$, $\rho_f=1$ and $R_0=1$.  We first introduce a model problem whose analytical solution is available  so that we are able to evaluate the accuracy of the numerical solution. We consider the scattering of a plane incident wave $p^{inc}=e^{ikx\cdot d}$ with direction $d=(1,0)$ by a disc-shaped elastic body of radius $R_0$ (see Figure \ref{fig2}). For the exact solution of this model problem, we refer to \cite{HKM08,YRX}. In numerical computations, the computational domain $\Om$ and $\Om_R$ are discretized by uniform triangle elements and we employ piecewise linear basis functions $\{\varphi_i\}_{i=1}^{i=N_1}$ and $\{\psi_i\}_{i=1}^{i=N_2}$ in $\Om$ and $\Omega_R$, respectively,  to construct the finite element space $\mathcal{H}_{h}$. Here, $N_1$ and $N_2$ are the total number of elements in $\Om$ and $\Om_R$, respectively. To find the finite element solution of (\ref{Weak3}), one needs to compute the integrals
\be
\label{numbn1}
\int_{\G_R}{(S^N\psi_j)\ov{\psi_i} ds},
\en
for those $\psi_i$ and $\psi_j$  not vanishing on $\G_R$. For the sake of simplicity, (\ref{numbn1}) can be approximated by
\be
\label{numbn2}
\int_{\G_R}{(S^N\psi_j)\ov{\psi_i} ds}\approx \int_{\G_R}{(S^N\zeta_j)\ov{\zeta_i} ds},
\en
where $\{\zeta_i\}$ are the corresponding piecewise linear basis functions in terms of  $\theta$ on $\G_R$. Thus, the computation of integrals (\ref{numbn1}) amounts to evaluating a series as
\ben
\int_{\Gamma_{R}}{(S^N\psi_j)\ov{\psi_i} ds} \approx {\sum\limits_{n=-N}^N}\ \frac{ {kR H_{n}^{(1)}}^{\prime}{(kR)}}{\pi H_{n}^{(1)}(kR)}\int_ {0}^{2\pi}\zeta_j(R,\phi)e^{-in\phi}d\phi \int_ {0}^{2 \pi}\ov{\zeta_i(R,\theta)}e^{in\theta} d\theta.
\enn

In the numerical experiments, we firstly check the accuracy of the DtN-FEM. We choose $R=2$, $N=20$ and consider three different wave numbers $k=1$, 2 and 4. Figure 3 shows the exact and numerical solutions of the elastic displacement ${u}=(u_x,u_y)$ in the solid and the acoustic scattered field $p$ in the fluid for the problem (\ref{Navier1})-(\ref{RadiationCond}) with $k=1$ and meshsize $h=0.1076$. We can observe that the numerical solutions are in a perfect agreement with the exact series solutions. According to Theorem \ref{H1estimate} and \ref{H0estimate}, as the truncation order $N$ of the DtN map is large enough that the domain discretization error is dominant, we should be able to observe that
\ben
\|{U}-{U}_{h}\|_ {\mathcal H^0}=O(h^2),\quad\|{U}-{U}_{h}\|_ {{\mathcal H}^1}=O(h).
\enn
In Figure 4, we show the log-log plot of errors ${U}-{U}_{h}$ measured in $\mathcal H^0$ and $\mathcal H^1$-norms with respect to $1/h$ which verifies the convergence order of $O(h^2)$ and $O(h)$. Finally, we are concerned with the effects of truncation order $N$ on the total numerical errors. We choose $k=1$ and compute the numerical errors measured in $\mathcal H^0$-norm for three different meshsizes $h=$0.4304, 0.2151 and 0.1076, respectively. The log-log plots of errors are presented in Figure 5 showing that the errors due to the truncation of the DtN map decay extremely fast, arriving at the low bound correlating to each finite element mesh size.

\begin{figure}[ht]
\centering
\begin{tabular}{ccc}
&\includegraphics[scale=0.2]{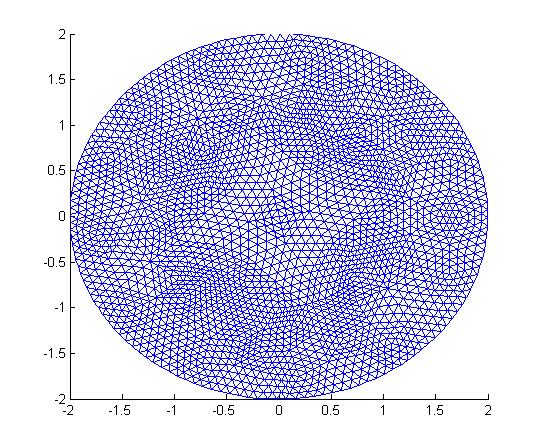}&  \\
& Mesh & \\
\includegraphics[scale=0.2]{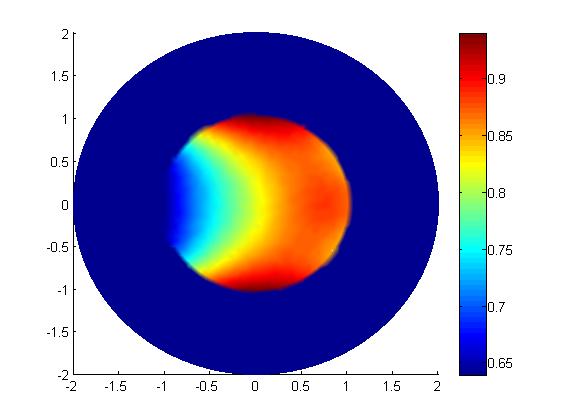} &
\includegraphics[scale=0.2]{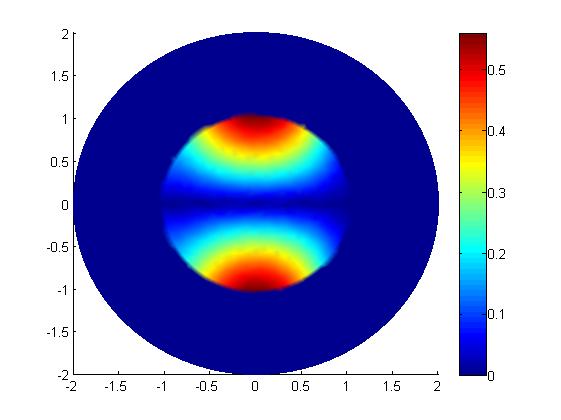} &
\includegraphics[scale=0.2]{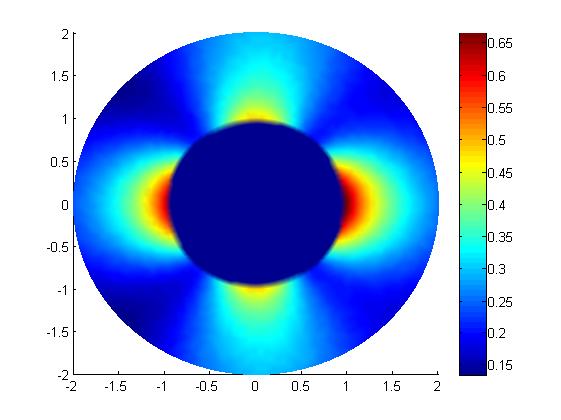} \\
(a) $|u_x|$ & (b) $|u_y|$ & (c) $|p|$ \\
\includegraphics[scale=0.2]{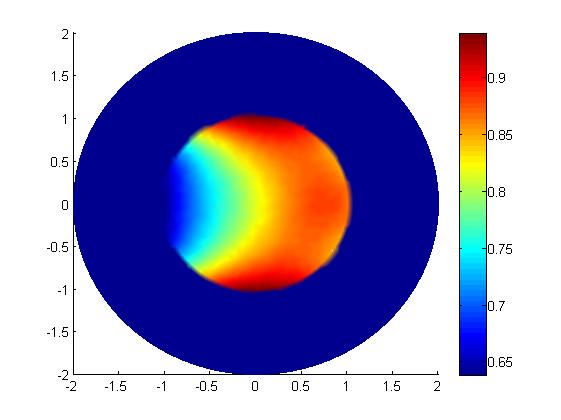} &
\includegraphics[scale=0.2]{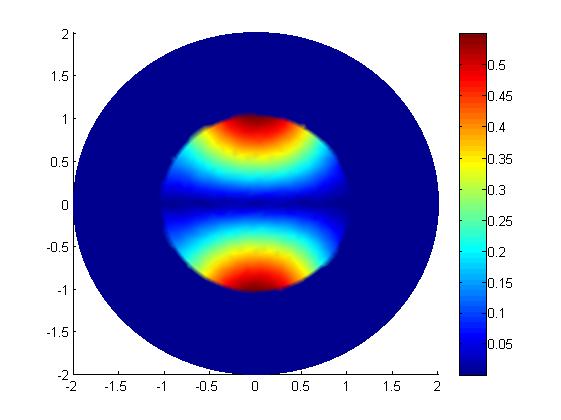} &
\includegraphics[scale=0.2]{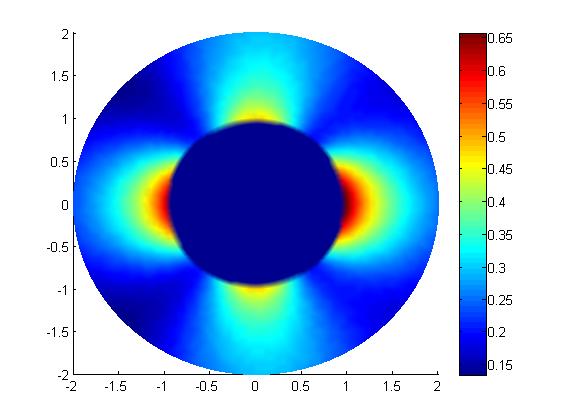} \\
(d) $|u_x|$ & (e) $|u_y|$ & (f) $|p|$ \\
\end{tabular}
\caption{Exact solutions (a,b,c) and numerical solutions of DtN-FEM (d,e,f).}
\end{figure}

\begin{figure}[ht]
\centering
\begin{tabular}{cc}
\includegraphics[scale=0.4]{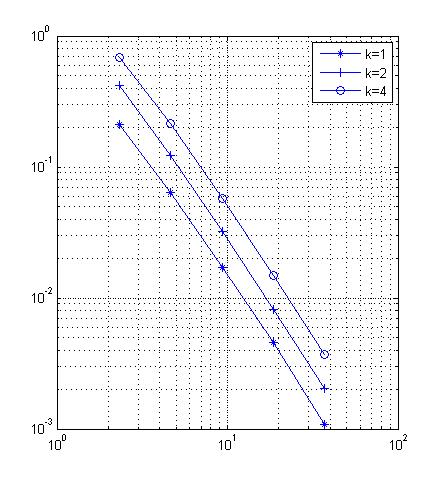} &
\includegraphics[scale=0.4]{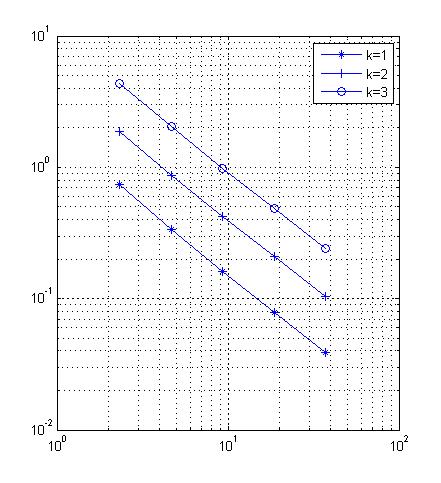} \\
\end{tabular}
\caption{Log-log plots for numerical errors (vertical) of ${U}$   vs. $1/h$ (horizontal). Left: ${\mathcal H}^0$-norm; right: ${\mathcal H}^1$-norm.}
\end{figure}

\begin{figure}[ht]
\centering
\includegraphics[scale=0.4]{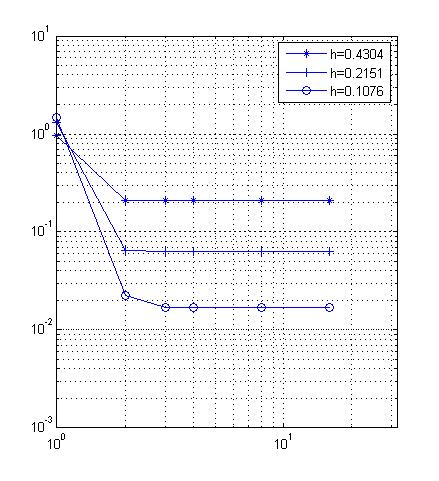}
\caption{Log-log plot for numerical errors $\|{U}-{U}_{h}\|_ {\mathcal H^0}$ (vertical) vs. $N$ (horizontal).}
\end{figure}



\appendix
\section{Proof of Theorem \ref{wellposedness2}}
\label{appendix1}
\renewcommand{\theequation}{A.\arabic{equation}}

Before proving Theorem \ref{wellposedness2}, we present some preliminary results.
\begin{definition}
\label{DefinitionA1}
Let $\langle\cdot,\cdot\rangle_1$ and $\langle\cdot,\cdot\rangle_2$ be inner products on $(H^1(\Om))^2\times (H^1(\Om))^2$ and $H^1(\Om_R)\times H^1(\Om_R)$ defined by, for $\forall\; u,v\in (H^1(\Om))^2$ and $\forall\; p,q\in H^1(\Om_R)$,
\ben
\label{innerproduct1}
\langle u,v\rangle_1 &=& \int_{\Om}{ \left[\lambda(\nabla\cdot u)(\nabla\cdot\ov{v})+\frac{\mu}{2}\left(\nabla{u}+(\nabla{ u})^T\right):\left(\nabla\ov{v}+(\nabla\ov{v})^T\right)+\widetilde{\alpha}{u} \cdot \ov {v}\right] dx},\\
\label{innerproduct2}
\langle p,q\rangle_2 &=& \int_{\Om_R}{\left(\nabla p\cdot \nabla \ov{q}+p \ov{q}\right) dx}+b_1(p,q),
\enn
which further induce the norms $|||\cdot|||_1$ on $(H^1(\Om))^2$ and $|||\cdot|||_2$ on $H^1(\Om_R)$, respectively, i.e., $\forall\,u\in (H^1(\Om))^2, p\in H^1(\Om_R)$,
\ben
|||u|||_1^2=\langle{u},{u}\rangle_1, \quad|||p|||_2^2=\langle p,p\rangle_2.
\enn
\end{definition}

From the above definition together with (\ref{a1vv}) and (\ref{b1}), we conclude that there exist constants $\alpha_0>0,\beta_0>0,\ga_0>0$ such that
\be
\label{equivilent1}
\alpha_0\|u\|_{\left(H^1(\Om)\right)^2}^2 \le |||u|||_1^2 \le \beta_0\|u\|_{\left(H^1(\Om)\right)^2}^2,\\
\label{equivilent2}
\|p\|_{H^1(\Om_R)}^2\le |||p|||_2^2 \le \ga_0\|p\|_{H^1(\Om_R)}^2.
\en
Now, let $\{\widetilde{\Phi}_i,\widetilde{\lambda}_i\}$ and $\{\widehat{\varphi}_i,\widehat{\lambda}_i\}$ be eigenpairs satisfying
\be
\label{eigen1}
\langle\widetilde{\Phi}_i,\Theta\rangle_1  &=& \widetilde{\lambda}_i
\left(\widetilde{\Phi}_i,\Theta\right)_{\left(L^2(\Omega)\right)^2}  , \quad\forall\; \Theta\in \left(H^1(\Omega)\right)^2,\\
\label{eigen2}
\langle\widehat{\varphi_i},\theta\rangle_2 &=& \widehat{\lambda}_i\left(\widehat{\varphi}_i,\theta\right)_{L^2(\Om_R)}, \quad\forall\; \theta\in H^1(\Om_R),
\en
where $\left(\cdot,\cdot\right)_{H}$ is the classical $L^2$ inner product on $H$. Without loss of generalities, we assume that $0<\widetilde{\lambda}_1\le\widetilde{\lambda}_2\le \ldots$, $0<\widehat{\lambda}_1\le\widehat{\lambda}_2\le \ldots$, and $(\widetilde\Phi_i,\widetilde\Phi_j)_{(L^2(\Omega))^2} =(\widehat\varphi_i,\widehat\varphi_j)_{L^2(\Omega_R)} =\delta_{ij}$.

\begin{lemma}
\label{TheoremA2}
Let $u=\sum_{n=1}^{\infty}{\widetilde c_n\widetilde{\Phi}_n}$, $p=\sum_{n=1}^{\infty}{\widehat{c}_n\widehat{\varphi}_n}$ and $U=(u,p)$.\\
(1). If $U\in \mathcal{H}^0$,
\be
\label{L4.1-1}
\|U\|_{\mathcal{H}^0}^2=\sum_{n=1}^{\infty}{(|\widetilde{c}_n|^2+|\widehat{c}_n|^2)}.
\en
(2). If $U\in \mathcal{H}^1$,
\be
\label{L4.1-2}
|||u|||_1^2 =\sum_{n=1}^{\infty}{\widetilde{\lambda}_i|\widetilde{c}_n|^2},\quad |||p|||_2^2=\sum_{n=1}^{\infty}{\widehat{\lambda}_i|\widehat{c}_n|^2}.
\en
(3). Let $H_{M_1}=\underset{\widetilde\lambda_i\le M_1}{span}\{\widetilde\Phi_i\},H_{M_2}=\underset{\widehat\lambda_i\le M_2}{span}\{\widehat\varphi_i\}$ and we define
\ben
H_{M_1}^{\perp}&=&\{{v}\in (H^1(\Om))^2:\langle{v},\Theta\rangle_1=0, \quad\forall\; \Theta\in H_{M_1}\},\\
H_{M_2}^{\perp}&=&\{q\in H^1(\Om_R): \langle q,\theta\rangle_1=0, \quad\forall\; \theta\in H_{M_2}\}.
\enn
Then we have
\be
\label{L4.1-3}
|||u|||_1^2\le M_1\|u\|_{(H^0(\Om))^2}^2,\quad|||p|||_2^2\le M_2\|p\|_{H^0(\Om_R)}^2,\quad\forall\; U\in H_{M_1}\times H_{M_2},
\en
and
\be
\label{L4.1-4}
\|u\|_{(H^0(\Om))^2}^2\le\frac{1}{M_1}|||u|||_1^2,\quad \|p\|_{H^0(\Om_R)}^2\le \frac{1}{M_2}|||p|||_2^2,\quad\forall\; U\in H_{M_1}^{\perp}\times H_{M_2}^{\perp}.
\en
\end{lemma}

\begin{lemma}
\label{TheoremA3}
Suppose $U=(u,p)\in \mathcal{H}^1$ satisfying
\ben
\label{L4.2-1}
\langle{u},\Theta\rangle_1&=&(\widetilde{f}_1,\Theta)_{(L^2(\Om))^2},\quad\forall\; \Theta\in (H^1(\Om))^2,\\
\label{L4.2-2}
\langle p,\theta\rangle_2&=&(\widehat f_2,\theta)_{L^2(\Om_R)},\quad\forall\; \theta\in H^1(\Om_R)
\enn
with $(\widetilde{f}_1,\widehat f_2)\in \mathcal{H}^0$. Then there exists a positive constant $c$ such that
\be
\label{L4.2-3}
\|u\|_{(H^2(\Om))^2}\le c\|\widetilde{f}_1\|_{(H^0(\Om))^2},\quad
\|p\|_{H^2(\Om_R)}\le c\|\widehat f_2\|_{H^0(\Om_R)}.
\en
\end{lemma}
\begin{proof}
The first assertion for $u$ follows from the classical regularity estimates (\cite{E98}). We now prove the regularity results for $p$. It follows that $b_1(\cdot,\cdot)$ is the corresponding sesquilinear form of the DtN map for the solution of homogeneous Laplace equation outside $\Om$. Now we define
\ben
\widetilde{p}(x)=
\begin{cases}
p(x),\quad &|x|\le R ,\cr \sum_{n\ge 0}\left(\frac{|x|}{R}\right)^n(a_n\cos n\theta+b_n\sin n\theta),\quad &|x|>R,\nonumber
\end{cases}
\enn
where $p(R,\theta)=\sum_{n\ge 0}(a_n\cos n\theta+b_n\sin n\theta)$. Note that for $\varphi\in C_0^\infty(\R^2)$,
\ben
\int_{\R^2\backslash\ov{\Omega}}\nabla\widetilde{p}\cdot\nabla\ov{\varphi}dx +\int_{\Om_R}\widetilde{p}\,\ov{\varphi}\,dx
&=& \langle \widetilde{p},\varphi\rangle_2-\int_{|x|\ge R}\Delta\widetilde{p}\,\ov{\varphi}\,dx\\
&=& \langle p,\varphi\rangle_2\\
&=& (\widehat f_2,\varphi).
\enn
Then we conclude from interior regularity estimates that $\widetilde{p}=p\in H^2(\Om_R)$ and
\ben
\|p\|_{H^2(\Om_R)} &\le& c\{\|\widetilde{p}\|_{H^1(\Om_{R+1})}+\|\widehat{f}_2\|_{H^0(\Om_R)}\}\\
&\le& c\|\widehat{f}_2\|_{H^0(\Om_R)}.
\enn
\end{proof}

\begin{corollary}
\label{TheoremA4}
If $U=(u,p)\in H_{M_1}\times H_{M_2}$,
\be
\label{C4.1-1}
\|u\|_{(H^2(\Om))^2}\le c M_1^{1/2}|||u|||_1,\quad \|p\|_{H^2(\Om_R)}\le cM_2^{1/2}|||p|||_2.
\en
\end{corollary}
\begin{proof}
For $U=(u,p)\in H_{M_1}\times H_{M_2}$, we can see that
\ben
u=\sum_{\widetilde\lambda_n\le M_1}{\widetilde{c}_n\widetilde{\Phi}_n}, \quad p=\sum_{\widehat\lambda_n\le M_2}{\widehat{c}_n\widehat{\varphi}_n}.
\enn
Then (\ref{eigen1}) and (\ref{eigen2}) imply that
\ben
\langle{u},\Theta\rangle_1&=&\left(\sum_{\widetilde\lambda_n\le M_1}{\widetilde\lambda_n\widetilde{c}_n\widetilde{\Phi}_n},\Theta\right),\quad\forall\; \Theta\in (H^1(\Om))^2,\\
\langle p,\theta\rangle_2&=&\left(\sum_{\widehat\lambda_n\le M_2}{\widehat\lambda_n\widehat{c}_n\widehat{\varphi}_n},\theta\right),\quad\forall\; \theta\in H^1(\Om_R).
\enn
Therefore, (\ref{C4.1-1}) follows from Lemma \ref{TheoremA3} and (\ref{L4.1-2}).
\end{proof}

\begin{corollary}
\label{TheoremA5}
If $U=(u,p)\in H_{M_1}^{\perp}\times H_{M_2}^{\perp}$, there exist positive constants $M_0,C_1$ such that for $M_1,M_2\ge M_0$,
\be
\label{C4.2-1}
\|U\|_{\mathcal{H}^1}^2\le C_1 \mbox{Re}\{A^N(U,U)\}.
\en
\end{corollary}
\begin{proof}
From (\ref{equivilent1}) and (\ref{equivilent2}), we know
\ben
\label{C4.2-2}
\alpha_0\|U\|_{\mathcal{H}^1}^2 \le |||{u}|||_1^2+\alpha_0|||p|||_2^2 \le \max\{\alpha_0,1\}(|||{u}|||_1^2 +|||p|||_2^2).
\enn
According to Definition \ref{DefinitionA1} and $b_1^N(p, p)\ge0$, we have
\be
\label{C4.2-3}
\alpha_0\|U\|_{\mathcal{H}^1}^2 &\le&
\max\{\alpha_0,1\}\mbox{Re}\{A^N({U},{U})\} +\max\{\alpha_0,1\}(2\mu+\rho\om^2)\|{u}\|_{(H^0(\Om))^2}^2\nonumber\\
&+&\max\{\alpha_0,1\}(k^2+1)\|p\|_{H^0(\Om_R)}^2
+ \max\{\alpha_0,1\}\mbox{Re}\{b_2^N(p,p)-a_3(u,p)-a_4(p,{u})\}.
\en
The Sobolev embedding theorem and the the interpolation inequality for $0<\theta=1/2+\varepsilon< 1$, $0<\varepsilon<1/2$ gives
\be
\label{C4.2-4}
|b_2^N(p,p)-a_3({u},p)-a_4(p,{u})|&\le& c(\|{u}\|_{(H^0(\G ))^2}^2+\|p\|_{H^0(\G_R)}^2)\nonumber\\
&\le& c(\|{ u}\|_{(H^{1/2+\varepsilon}(\Om))^2}^2+\|p\|_{H^{1/2+\varepsilon}(\Om_R )}^2)\nonumber\nonumber\\
&\le& c(\|{u}\|_{(H^0(\Om))^2}^{1-2\varepsilon}\|{u}\|_{(H^1(\Om ))^2}^{1+2\varepsilon} +\|p\|_{H^0(\Om_R )}^{1-2\varepsilon}\|p\|_{H^1(\Om_R )}^{1+2\varepsilon}).
\en
A combination of (\ref{equivilent1}), (\ref{equivilent2}) and (\ref{L4.1-4}) leads to
\be
\label{C4.2-5}
\|{u}\|_{(H^0(\Omega ))^2}^2\le\frac{\beta_0}{M_1}\|{u}\|_{(H^1(\Omega ))^2}^2,\|p\|_{H^0(\Omega_R )}^2\le\frac{\gamma_0}{M_2}\|p\|_{H^1(\Omega_R )}^2.
\en
Thus, we conclude from (\ref{C4.2-3})-(\ref{C4.2-5}) that
\be
\label{C4.2-6}
\|{U}\|_{\mathcal{H}^1}^2 &\le&
C_\alpha \mbox{Re}\{A^N({U},{U})\}+C_\alpha(2\mu+\rho\om^2)\|{ u}\|_{(H^0(\Om))^2}^2+C_\alpha(k^2+1)\|p\|_{H^0(\Om_R)}^2 \nonumber\\
&+& C_\alpha|b_2^N(p,p)-a_3({u},p)-a_4(p,{u})|\nonumber\\
&\le& C_\alpha \mbox{Re}\{A^N({U},{U})\}\nonumber\\
&+& C_\alpha\frac{\beta_0^2(2\mu+\rho\om^2)}{M_1^2} \|{u}\|_{(H^1(\Om ))^2}^2+ C_\alpha\frac{\ga_0^2(1+k^2)}{M_2^2}\|p\|_{H^1(\Om_R )}^2 \nonumber\\
&+& C_\alpha \frac{c\beta_0^{1/2-\varepsilon}}{M_1^{1/2-\varepsilon}}\|{ u}\|_{(H^1(\Om))^2}^2+C_\alpha \frac{c\ga_0^{1/2-\varepsilon}}{M_2^{1/2-\varepsilon}}\|p\|_{H^1(\Om_R)}^2,
\en
where $C_\alpha=\max\{1,1/\alpha_0\}$. Let $M_0>0$ be such that
\be
\label{C4.2-7}
1-C_\alpha\frac{\beta_0^2(2\mu+\rho\om^2)}{M_0^2}-C_\alpha \frac{c\beta_0^{1/2-\varepsilon}}{M_0^{1/2-\varepsilon}} >\frac{1}{2},\\
\label{C4.2-8}
1-C_\alpha\frac{\ga_0^2(1+k^2)}{M_0^2}-C_\alpha \frac{c\ga_0^{1/2-\varepsilon}}{M_0^{1/2-\varepsilon}} >\frac{1}{2}.
\en
Then for $M_1,M_2\ge M_0$, there exists a constant $C_1=2C_\alpha$ such that (\ref{C4.2-1}) holds.
\end{proof}

\begin{corollary}
\label{TheoremA6}
If ${U}=({u},p)\in H_{M_1}\times H_{M_2}$ and ${V}=(v,q)\in H_{M_1}^{\perp}\times H_{M_2}^{\perp}$, we have
\be
\label{C4.3-1}
|A({U},{V})|\le c\ga_0^{1/4-\varepsilon/2} M_2^{\varepsilon/2-1/4} \|p\|_{H^1(\Om_R)}\|q\|_{H^1(\Om_R)}.
\en
where $c>0$ and $0<\varepsilon<1/2$ are constants independent of ${U}$ and ${V}$.
\end{corollary}
\begin{proof}
It easily follows that
\ben
|A({U},{V})| &=& |\langle{u},{v}\rangle_1-(\widetilde{\alpha}+\rho\om^2)({u},{ v})_{(H^0(\Om))^2}+\langle p,q\rangle_2-(k^2+1)(p,q)_{H^0(\Om_R)}-b_2(p,q)|\\
&=& |b_2(p,q)|\\
&\le& c\|p\|_{H^{1/2+\varepsilon}(\Om_R)}\|q\|_{H^{1/2+\varepsilon}(\Om_R)},
\enn
where $c>0$ is a constant. Then the interpolation inequality and (\ref{C4.2-5}) leads to (\ref{C4.3-1}).
\end{proof}

\begin{corollary}
\label{TheoremA7}
If ${U}=({u},p)\in H_{M_1}\times H_{M_2}$, there exist ${V}=({v},q)\in H_{M_1}\times H_{M_2}$ and a positive constant $c$ such that
\be
\label{C4.4-1}
\|{U}\|_{\mathcal{H}^1}\le c\frac{\mbox{Re}\{A({U},{V})\}}{\|{ V}\|_{\mathcal{H}^1}}.
\en
\end{corollary}
\begin{proof}
Firstly, we observe that for ${W}=({w},\varpi)\in \mathcal{H}^1$, we can rewrite it as
\be
\label{C4.4-2}
{W}={W}_M+{W}_M^{\perp}=({w}_{M_1}+{ w}_{M_1}^{\perp},\varpi_{M_2}+\varpi_{M_2}^{\perp}),
\en
where ${w}_{M_1}\in H_{M_1}$, ${w}_{M_1}^{\perp}\in H_{M_1}^{\perp}$, $\varpi_{M_2}\in H_{M_2}$ and $\varpi_{M_2}^{\perp}\in H_{M_2}^{\perp}$. Then from (\ref{equivilent1}) and (\ref{equivilent2}) we know that
\be
\label{C4.4-3}
\|{W}_M\|_{\mathcal{H}^1}^2 &=& \|{w}_{M_1}\|_{(H^1(\Om ))^2}^2+\|\varpi_{M_2}\|_{H^1(\Om_R)}^2\nonumber\\
&\le& \frac{1}{\alpha_0}|||{w}_{M_1}|||_1 +|||\varpi_{M_2}|||_2\nonumber\\
&\le& \frac{1}{\alpha_0}|||{w}|||_1 +|||\varpi|||_2\nonumber\\
&\le& \frac{\beta_0}{\alpha_0}\|{ w}\|_{(H^1(\Om))^2}^2+\ga_0\|\varpi\|_{H^1(\Om_R)}^2 \nonumber\\
&\le& \max\left\{\frac{\beta_0}{\alpha_0},\ga_0\right\}\|{ W}\|_{\mathcal{H}^1}^2.
\en
Similarly, we have
\be
\label{C4.4-4}
\|{W}_M^{\perp}\|_{\mathcal{H}^1}^2 \le \max\left\{\frac{\beta_0}{\alpha_0},\ga_0\right\}\|{W}\|_{\mathcal{H}^1}^2.
\en
From the stability of (\ref{Weak1}), we know that there exists a ${W}=({ w},\varpi)\in \mathcal{H}^1$ such that
\ben
\|{U}\|_{\mathcal{H}^1}\le c\frac{\mbox{Re}\{A({U},{W})\}}{\|{ W}\|_{\mathcal{H}^1}},
\enn
where $c>0$ is a constant. Let ${V}={W}_M$. Therefore, Corollary \ref{TheoremA6}, equations (\ref{C4.4-3}) and (\ref{C4.4-4}) yield
\ben
\label{C4.4-5}
\|{U}\|_{\mathcal{H}^1} &\le& c\frac{\mbox{Re}\{A({U},{W})\}}{\|{ W}\|_{\mathcal{H}^1}} \nonumber\\
&=& c\frac{\mbox{Re}\{A({U},{V})+A({U},{W}-{V})\}}{\|{W}\|_{\mathcal{H}^1}} \nonumber\\
&\le& c \max\left\{\frac{\beta_0}{\alpha_0},\ga_0\right\}\frac{\mbox{Re}\{A({ U},{V})\}}{\|{V}\|_{\mathcal{H}^1}}+ c\gamma_0^{1/4-\varepsilon/2} M_2^{\varepsilon/2-1/4}\max\left\{\frac{\beta_0}{\alpha_0},\ga_0\right\} \|{U}\|_{\mathcal{H}^1}.
\enn
Let $M_0>0$ be such that
\be
\label{C4.4-6}
1-c\gamma_0^{1/4-\varepsilon/2} M_0^{\varepsilon/2-1/4}\max\left\{\frac{\beta_0}{\alpha_0},\ga_0\right\}>\frac{1}{2}.
\en
Thus, (\ref{C4.4-1}) holds for $M_2\ge M_0$.
\end{proof}

{\bf Proof of Theorem \ref{wellposedness2}.} Here we prove that for all ${U}=({u},p)\in \mathcal{H}^1$, there exists a constant $N_{0}=N_0\geq 0$ such that
\be
\label{T4.3-1}
\|{U}\|_{\mathcal{H}^1} \le c \sup_{({\bf 0},0)\neq {V}\in \mathcal{H}_{h}}\frac{|A^N({U},{V})|}{\|{V}\|_{\mathcal{H}^1}},
\en
where $c>0$ is a constant. Then the uniqueness follows immediately and then Theorem \ref{wellposedness2} follows from  the Fredholm Alternative theorem. We use the same form as (\ref{C4.4-2}) to represent ${U}$ as
\ben
\label{T4.3-2}
{U}={U}_M+{U}_M^{\perp}=({u}_{M_1}+{ u}_{M_1}^{\perp},p_{M_2}+p_{M_2}^{\perp}),
\enn
where ${u}_{M_1}\in H_{M_1}$, ${u}_{M_1}^{\perp}\in H_{M_1}^{\perp}$, $p_{M_2}\in H_{M_2}$ and $p_{M_2}^{\perp}\in H_{M_2}^{\perp}$.

If ${U}=0$, then (\ref{T4.3-1}) holds trivially.

If ${U}\ne0$, we can obtain from Corollary \ref{TheoremA7} that there exist ${ V}_M=({v}_{M_1},q_{M_2})\in H_{M_1}\times H_{M_2}$ and a positive constant $c$ such that
\be
\label{T4.3-3}
\|{U}_M\|_{\mathcal{H}^1}\le c\frac{\mbox{Re}\{A({U}_M,{V}_M)\}}{\|{\bf V}_M\|_{\mathcal{H}^1}}
\en
with $\|{U}_M\|_{\mathcal{H}^1}=\|{V}_M\|_{\mathcal{H}^1}$ after scaling. Now we define ${V}={V}_M+{U}_M^{\perp}$. Then (\ref{T4.3-3}) together with Corollary \ref{TheoremA5} give
\be
\label{T4.3-4}
\|{U}_M\|_{\mathcal{H}^1}^2+\|{U}_M^{\perp}\|_{\mathcal{H}^1}^2 &\le& c(\mbox{Re}\{A({U}_M,{V}_M)\}+\mbox{Re}\{A^N({U}_M^{\perp},{U}_M^{\perp})\}) \nonumber\\
&=& c|A^N({U},{V})|+c|A^N({U}_M,{U}_M^{\perp}|+c|A^N({U}_M^{\perp},{V}_M)| \nonumber\\
&+& c|b(p_{M_2},q_{M_2})-b^N(p_{M_2},q_{M_2})|.
\en
According to Theorem \ref{Theorem4.1}, we obtain
\be
\label{T4.3-5}
|b(p_{M_2},q_{M_2})-b^N(p_{M_2},q_{M_2})| &\le& \|(S-S^N)p_{M_2}\|_{H^{-3/2}(\G_R)}\|q_{M_2}\|_{H^{3/2}(\G_R)}  \nonumber\\
&\le& cN^{-2}\|p_{M_2}\|_{H^{3/2}(\G_R)} \|q_{M_2}\|_{H^{3/2}(\G_R)}\nonumber\\
&\le& cN^{-2}\|p_{M_2}\|_{H^2(\Om_R)}\|q_{M_2}\|_{H^2(\Om_R)} \nonumber\\
&\le& c_1N^{-2}M_2\gamma_0\|{U}_M\|_{\mathcal{H}^1}^2.
\en
Additionally, we have that
\be
\label{T4.3-6}
|A^N({U}_M,{U}_M^{\perp})|&\le& |A({U}_M,{U}_M^{\perp})|+
|b(p_{M_2},p_{M_2}^{\perp})-b^N(p_{M_2},q_{M_2}^{\perp})|.
\en
Then Theorem \ref{Theorem4.1} gives
\be
\label{T4.3-7}
|b(p_{M_2},p_{M_2}^{\perp})-b^N(p_{M_2},q_{M_2}^{\perp})| &\le& \|(S-S^N)p_{M_2}\|_{H^{-1/2}(\G_R)}\|q_{M_2}^{\perp}\|_{H^{1/2}(\G_R)}  \nonumber\\
&\le& cN^{-1}\|p_{M_2}\|_{H^{3/2}(\G_R)} \|q_{M_2}^{\perp}\|_{H^{1/2}(\G_R)}\nonumber\\
&\le& cN^{-1}\|p_{M_2}\|_{H^2(\Om_R)}\|q_{M_2}^{\perp}\|_{H^1(\Om_R)} \nonumber\\
&\le& c_2N^{-1}M_2^{1/2}\gamma_0^{1/2}\|{U}_M\|_{\mathcal{H}^1} \|{ U}_M^{\perp}\|_{\mathcal{H}^1}.
\en
Therefore, Corollary, (\ref{T4.3-6}), (\ref{T4.3-7}) and the arithmetic-geometric mean inequality lead to
\be
\label{T4.3-8}
|A^N({U}_M,{U}_M^{\perp})|&\le& (c_3\gamma_0^{1/4-\varepsilon/2} M_2^{\varepsilon/2-1/4}+c_4N^{-1}M_2^{1/2}\gamma_0^{1/2})(\|{ U}_M\|_{\mathcal{H}^1}^2+\|{U}_M^{\perp}\|_{\mathcal{H}^1}^2).
\en
Similarly, we have
\be
\label{T4.3-9}
|A^N({U}_M^{\perp},{V}_M)|&\le& (c_5\gamma_0^{1/4-\varepsilon/2} M_2^{\varepsilon/2-1/4}+c_6N^{-1}M_2^{1/2}\gamma_0^{1/2})(\|{ U}_M\|_{\mathcal{H}^1}^2+\|{U}_M^{\perp}\|_{\mathcal{H}^1}^2).
\en
Here, we need that $M_1,M_2\ge M_0$ where $M_0>0$ satisfies (\ref{C4.2-7}), (\ref{C4.2-8}) and (\ref{C4.4-6}). In additionally, we suppose that $M_0>0$ and $N_0\ge0$ are large enough such that
\be
\label{T4.3-10}
1-c_1N_0^{-2}M_0\gamma_0-(c_3+c_5)\gamma_0^{1/4-\varepsilon/2} M_0^{\varepsilon/2-1/4}-(c_5+c_6)N_0^{-1}M_0^{1/2}\gamma_0^{1/2}>\frac{1}{2},
\en
which further implies that there is a positive constant $c$ such that
\be
\label{T4.3-11}
\|{U}_M\|_{\mathcal{H}^1}^2+\|{U}_M^{\perp}\|_{\mathcal{H}^1}^2 \le c|A^N({ U},{V})|.
\en
Now, since
\ben
\alpha_0\|{u}\|_{(H^1{\Om})^2}^2+\|p\|_{H^1(\Om_R)}^2 &\le& |||{ u}|||_1^2+|||p|||_2^2 \nonumber\\
&=& |||{u}_{M_1}|||_1^2+|||{ u}_{M_1}^{\perp}|||_1^2+|||p_{M_2}|||_2^2+|||p_{M_2}^{\perp}|||_2^2 \nonumber\\
&\le& \beta_0(\|{u}_{M_1}\|_{(H^1{\Om})^2}^2+\|{ u}_{M_1}^{\perp}\|_{(H^1{\Om})^2}^2)+ \gamma_0(\|p_{M_2}\|_{H^1(\Om_R)}^2+\|p_{M_2}^{\perp}\|_{H^1(\Om_R)}^2) \nonumber\\
&\le& \max\{\beta_0,\gamma_0\}(\|{U}_M\|_{\mathcal{H}^1}^2+\|{ U}_M^{\perp}\|_{\mathcal{H}^1}^2),
\enn
we have
\be
\label{T4.3-12}
\|{U}\|_{\mathcal{H}^1}^2 \le \frac{\max\{\beta_0,\gamma_0\}}{\min\{1,\alpha_0\}}(\|{ U}_M\|_{\mathcal{H}^1}^2+\|{U}_M^{\perp}\|_{\mathcal{H}^1}^2).
\en
Thus,
\be
\label{T4.3-13}
\|{U}\|_{\mathcal{H}^1}^2 \le c|A^N({U},{V})|
\en
for some constant $c>0$.
Finally, we obtain from (\ref{T4.3-12}) similarly that
\be
\label{T4.3-14}
\|{V}\|_{\mathcal{H}^1} &\le& \sqrt{\frac{\max\{\beta_0,\gamma_0\}}{\min\{1,\alpha_0\}}\left(\|{ V}_M\|_{\mathcal{H}^1}^2+\|{ U}_M^{\perp}\|_{\mathcal{H}^1}^2\right)}\nonumber\\
&=& \sqrt{\frac{\max\{\beta_0,\gamma_0\}}{\min\{1,\alpha_0\}}\left(\|{ U}_M\|_{\mathcal{H}^1}^2+\|{ U}_M^{\perp}\|_{\mathcal{H}^1}^2\right)}\nonumber\\
&\le& \sqrt{\frac{\max\{\beta_0,\gamma_0\}}{\min\{1,\alpha_0\}} \left(\frac{1}{\alpha_0}|||{u}|||_1^2+|||p|||_2^2\right)}\nonumber\\
&\le& \sqrt{\frac{\max\{\beta_0,\gamma_0\}}{\min\{1,\alpha_0\}} \max\left\{\frac{\beta_0}{\alpha_0},\gamma_0\right\}}\|{U}\|_{\mathcal{H}^1}.
\en
Therefore, (\ref{T4.3-13}) and (\ref{T4.3-14}) give
\ben
\|{U}\|_{\mathcal{H}^1} &\le& c\frac{|A^N({U},{V})|}{\|{V}\|_{\mathcal{H}^1}} \nonumber\\
&\le& c \sup_{(0,0)\neq {V}\in \mathcal{H}_{h}}\frac{|A^N({U},{V})|}{\|{ V}\|_{\mathcal{H}^1}},
\enn
where $c>0$ is a constant. This completes the proof.

\section*{Acknowledgments}
L. Xu is partially supported by a Key Project
of the Major Research Plan of NSFC (No. 91630205), and a NSFC Grant (11771068), and he also would like to thank  Prof. G.C. Hsiao and Prof. J.E. Pasciak  for their invaluable encouragements and suggestions  which are of great  importance for the completion of this work.


\begin{thebibliography}{00}
\bibitem{AS65} M. Abramowitz, I.A. Stegun, Handbook of Mathematical Functions, Dover, New York, 1965.
\bibitem{Bao95} G. Bao, Finite element approximation of time harmonic wave in periodic structure, SIAM J. Numer. Anal. 32 (4) (1995) 1156-1169.
\bibitem{Bao97} G. Bao, Variational approximation of Maxwell's equations in biperiodic structures, SIAM J. Appl. Math. 57 (2) (1997) 364-381.
\bibitem{BM91} J. Bielak, R.C. MacCamy, Symmetric finite element and boundary integral coupling methods for fluid-solid interaction, Q. Appl. Math. 49 (1991) 107-119.
\bibitem{NN09} T.L. Binford, D.P. Nicholls, N. Nigam, T. Warburton, Exact non-reflecting boundary conditions on perturbed domains and hp-finite elements. J. Sci. Comput. 39(2) (2009) 265-292.
\bibitem{DSM121} C. Dom\'inguez, E.P. Stephan, M. Maischak, A FE-BE coupling for a fluid-structure interaction problem: hierarchical a posteriori error estimates, Numer. Methods Partial Differential Equations 28 (2012) 1417-1439.
\bibitem{DSM122} C. Dom\'inguez, E.P. Stephan, M. Maischak, FE/BE coupling for an acoustic fluid-structure interaction problem: residual a posteriori error estimates,  Internat. J. Numer. Methods Engrg. 89 (2012) 299-322.
\bibitem{E98}  L.C. Evans, Partial differential equations, volume 19 of Graduate Studies in Mathematics, American Mathematical Society, Providence, RI, 1998.
\bibitem{F83} K. Feng, Finite element method and natural boundary reduction, in: Proceedings of the International Congress of Mathematicians, Warsaw, 1983, 1439-1453.
\bibitem{F84} K. Feng, Asymptotic radiation conditions for reduced wave equation, J. Comput. Math. 2 (1984) 130-138.
\bibitem{GHM09} G.N. Gatica, G.C. Hsiao, S. Meddahi, A residual-based a posteriori error estimator for a two-dimensional fluid-solid interaction problem, Numer. Math. 114 (2009) 63-106.
\bibitem{GMM07} G.N. Gatica, A. M\'{a}rquez, S. Meddahi, Analysis of the coupling of primal and dual-mixed finite element methods for a two-dimensional fluid-solid interaction problem, SIAM J. Numer. Anal. 45(5) (2007) 2072-2097.
\bibitem{GMM12} G.N. Gatica, A. M\'{a}rquez, S. Meddahi, Analysis of the coupling of Lagrange and Arnold-Falk-Winther finite elements for a fluid-solid interaction problem in three dimensions, SIAM J. Numer. Anal. 50(3) (2012) 1648-1674.
\bibitem{GYX} H. Geng, T. Yin, L. Xu,  A priori error estimates of the DtN-FEM for the transmission problem in acoustics, J. Com. Appl. Math. 313 (2017) 1-17.
\bibitem{G99} D. Giviol, Recent advances in the DtN FE method, Arch. Comput. Methods Engrg. 6 (2) (1999) 71-116.
\bibitem{GK95} M.J. Grote, J. Keller, On nonreflecting boundary conditions, J. Comput. Phys. 122 (1995) 231-243.
\bibitem{HW85} H. Han, X. Wu, Approximation of infinite boundary condition and its application to finite element methods, J. Comput. Math. 3 (1985) 179-192.
\bibitem{HW92} H. Han, X. Wu, The approximation of the exact boundary conditions at an artificial boundary for linear elastic equations and its application,  Math. Comp. 59 (1992) 21-37.
\bibitem{HW13} H. Han, X. Wu, Artificial boundary method, Springer-Verlag Berlin Heidelberg, 2013.
\bibitem{HH92} I. Harari, T.J.R. Hughes, Analysis of continuous formulations underlying the computation of time-harmonic acoustics in exterior domains. Comput. Methods Appl. Mech. Eng. 97(1) (1992) 103-124.
\bibitem{H95} C.O. Horgan, Korn's inequalities and their applications in continuum Mechanics, SIAM Review 37(4) (1995) 491-511.
\bibitem{H94} G.C. Hsiao, On the boundary-field equation methods for fluid-structure interactions, Problems and Methods in Mathematical Physics, Teubner-Text zur Mathematik, 34 (1994) 79-88.
\bibitem{HKR00} G.C. Hsiao, R.E. Kleinman, G.F. Roach, Weak solutions of fluid-solid interaction problems, Math. Nachr. 218 (2000) 139-163.
\bibitem{HKS88} G.C. Hsiao, R.E. Kleinman, L.S. Schuetz, On variational formulations of boundary value problems for fluid-solid interactions,  Elastic Wave Propagation (Galway),  North-Holland Ser. Appl. Math. Mech. , North-Holland, Amsterdam, 35 (1988) 312-326.
\bibitem{HNPX11} G.C. Hsiao, N. Nigam, J.E. Pasciak, L. Xu, Error analysis of the DtN-FEM for the scattering problem in acoustics via Fourier analysis, J. Com. Appl. Math. 235 (2011) 4949-4965.
\bibitem{HW04} G.C. Hsiao, W. Wendland, Boundary element methods: foundation and error analysis, in: E. Stein, R. de Borst, T.J.R. Hughes (Eds.), in: Encyclopedia of Computational Mechanics, vol. 1, John Wiley and Sons, Ltd., 2004, pp. 339-373.
\bibitem{HRY} G. Hu, A. Rathsfeld, T. Yin, Finite element method to fluid-solid interaction problems with unbounded periodic interfaces, Numer. Methods Partial Differential Equations 32 (2016) 5-35.
\bibitem{HKM08} T. Huttunen, J.P. Kaipio, P. Monk, An ultra-weak method for acoustic fluid-solid interaction, J. Comput. Appl. Math. 213 (2008) 166-185.
\bibitem{JDS83} D.S. Jones, Low-frequency scattering by a body in lubricated contact, Quart. J. Mech. Appl. Math. 36 (1983) 111-137.
\bibitem{KG89}  J. Keller, D. Givoli, Exact non-reflecting boundary conditions, J. Comput. Phys. 82 (1989) 172-192.
\bibitem{LM95} C.J. Luke, P.A. Martin, Fluid-solid interaction: acoustic scattering by a smooth elastic obstacle, SIAM J. Appl. Math. 55(4) (1995) 904-922.
\bibitem{MMS04} A. M\'{a}rquez, S. Meddahi, V. Selgas, A new BEM-FEM coupling strategy for two-dimensionam fluid-solid interaction problems, J. Comput. Phys. 199 (2004) 205-220.
\bibitem{MS10} J.M. Melenk, S. Sauter, Convergence analysis for finite element discretizations of the Helmholtz equation with Dirichlet-to-Neumann boundary conditions, Math. Comp. 79 (2010) 1871-1914.
\bibitem{NN04} D. Nicholls, N. Nigma, Exact non-reflecting boundary conditions on general domains, J. Comput. Phys. 194 (2004) 278-303.
\bibitem{NN06} D. Nicholls, N. Nigma, Error analysis of an enhanced DtN-FE method for exterior scattering problems, Numer. Math. 105 (2006) 267-298.
\bibitem{Schatz74} A. Schatz, An observation concerning Ritz-Galerkin methods with indefinite bilinear forms, Math. Comp. 28 (1974) 959-962.
\bibitem{HXZ} L. Xu, G.C. Hsiao, S. Zhang, Nonsingular kernel boundary integral and finite element coupling method, submitted.
\bibitem{YHX} T. Yin,  G.C. Hsiao, L. Xu, Boundary integral equation methods for the two dimensional  fluid-solid interaction problem,  SIAM J. Numer. Anal. 55 (2017) 2361-2393.
\bibitem{YRX} T. Yin, A. Rathsfeld, L. Xu, A BIE-based DtN-FEM for fluid-solid interaction problems, J. Comp. Math. 36 (2018) 47-69.
\end{thebibliography}
\end{document}